 \tikzset{
  on each segment/.style={
    decorate,
    decoration={
      show path construction,
      moveto code={},
      lineto code={
        \path [#1]
        (\tikzinputsegmentfirst) -- (\tikzinputsegmentlast);
      },
      curveto code={
        \path [#1] (\tikzinputsegmentfirst)
        .. controls
        (\tikzinputsegmentsupporta) and (\tikzinputsegmentsupportb)
        ..
        (\tikzinputsegmentlast);
      },
      closepath code={
        \path [#1]
        (\tikzinputsegmentfirst) -- (\tikzinputsegmentlast);
      },
    },
  },
  mid arrow/.style={postaction={decorate,decoration={
        markings,
        mark=at position .5 with {\arrow[#1]{stealth}}
      }}},
}
\numberwithin{figure}{section}
\newtheorem{theorem}{Theorem}[section]
\newtheorem{lemma}[theorem]{Lemma}
\newtheorem{corollary}[theorem]{Corollary}
\newtheorem{main theorem}[theorem]{Main Theorem}
\newtheorem{proposition}[theorem]{Proposition}
\newtheorem{definition}[theorem]{Definition}
\newtheorem{remark}[theorem]{Remark}
\newtheorem{example}[theorem]{Example}
\newtheorem{notation}[theorem]{Notation}
\newtheorem{question}[theorem]{Question}
\numberwithin{equation}{section}
\def\<{\langle} 
\def\>{\rangle} 
\def\NN{\mathbb{N}} 
\newcommand{\Pic}{F{\tiny{IGURE}}\ }
\newcommand{\modcat}{\mathsf{mod}}
\newcommand{\ind}{\mathsf{ind}}
\newcommand{\kk}{\mathds{k}} 
\newcommand{\Q}{\mathcal{Q}} 
\newcommand{\I}{\mathcal{I}} 
\newcommand{\J}{\mathcal{J}}
\newcommand{\End}{\mathrm{End}} %
\newcommand{\rad}{\mathrm{rad}} %
\newcommand{\op}{\mathrm{op}}
\def\A{\mathbb{A}}
\def\D{\mathbb{D}}
\def\dAA{\mathbb{A}\mkern-12.5mu\raisebox{0.33em}[]{${}^{\to}$}}
\def\dDD{\mathbb{D}\mkern-12.5mu\raisebox{0.33em}[]{${}^{\to}$}}
\def\dA{\overrightarrow{\pmb{A}}}
\def\dD{\overrightarrow{\pmb{D}}}
\def\rad{\mathrm{rad}}
\def\alg{\mathit{\Lambda}}
\def\source{\mathfrak{s}}
\def\target{\mathfrak{t}}
\def\Left{\mathrm{L}}
\def\Right{\mathrm{R}}
\def\Upper{\mathrm{U}}
\def\Down{\mathrm{D}}
\def\rmH{\mathrm{h}}
\def\rmV{\mathrm{v}}
\def\e{\varepsilon}
\def\frakp{\mathfrak{p}}
\def\frakS{\mathfrak{S}}
\def\frakI{\mathfrak{I}}
\newcommand{\idem}{\pmb{i}}
\newcommand{\z}{\pmb{z}}
\newcommand{\s}{\pmb{s}}
\newcommand{\itLamb}{\mathit{\Lambda}}
\def\spa{\mathrm{span}_{\kk}}
\def\nat{\pi}
\def\Aus{\mathrm{Aus}}
\newcommand{\Square}[2]{{_{\mathcal{S}}\overline{\mathcal{Q}}}_{#1,#2}}
\newcommand{\vertex}[2]{\mathfrak{v}_{#1,#2}}
\newcommand{\checkLiu}[1]{{\color{black}#1}}
\def\defines{\it\color{black!75}}
\newcommand{\gluing}{gluing\ }
\newcommand{\Gluing}{Gluing\ }
\newcommand{\g}{\mathfrak{g}}
\newcommand{\DS}{DS}
\newcommand{\PDS}{$1^{\mathrm{st}}$DS} 
\newcommand{\NPDS}{$2^{\mathrm{nd}}$DS} 
\newcommand{\GDS}{GDS}
\begin{document}

\title[The tensorial description of the Auslander algebras for of string algebras]{The tensorial description of the Auslander algebras of representation-finite string algebras}
\thanks{$^{\ast}$Corresponding author.}
\thanks{MSC2020: 16G10, 16G60, 16G70.}
\thanks{Key words: string algebra, Auslander algebra, enveloping algebra}

\author{Hui Chen}
\address{School of Biomedical Engineering and Informatics, Nanjing Medical University,211166, Nanjing, Jiangsu, P. R. China.}
\email{huichen@njmu.edu.cn}

\author{Jian He$^\ast$}
\address{Department of Applied Mathematics, Lanzhou University of Technology, 730050, Lanzhou, Gansu, P. R. China}
\email{jianhe30@163.com}

\author{Yu-Zhe Liu}
\address{School of Mathematics and statistics, Guizhou University, 550025, Guiyang, Guizhou, P. R. China}
\email{liuyz@gzu.edu.cn / yzliu3@163.com}


\definecolor{section}{rgb}{0,0,0.5}

\begin{abstract}
The aim of this article is to study the Auslander algebra of any representation-finite string algebra. More precisely, we introduce the notion of gluing algebras and show that the Auslander algebra of a representation-finite string algebra is a quotient of a \gluing algebra of $\dA_n^e $. As applications, the Auslander algebras of two classes of string algebras whose quivers are Dynkin types $\A$ and $\D$ are described. Moreover, the representation types of the above Auslander algebras are also given exactly.

\end{abstract}

\maketitle


\section{Introduction}
The notion of string algebras, a class of special biserial algebra \cite{SW1983}, was introduced by Butler and Ringle in \cite{BR1987}.
Many important algebras are string, such as gentle algebras, Nakayama algebras.
There are many works on string algebras (for example, \cite[etc]{CG2017, CSZ2018, GKS2022, L2016, LPP2018, LZ2023, PP2016}). In particular, Butler and Ringel \cite{BR1987} classified the vertices of the Auslander-Reiten quiver of a string algebra into two classes,
namely the string modules and the band modules, as well as its arrows into four classes.
In \cite{CSZ2018}, Chen, Shen and Zhou constructed non-trivial indecomposable Gorenstein projective modules over string algebras.
Recently, Liu and Zhang \cite{LZ2023} characterized the Cohen-Macaulay Auslander algebra of any string algebra.

Let $A$ be an algebra of finite type and  $M_1,M_2,...,M_n$ be a complete set of representatives of the isomorphism classes of indecomposable $A$-modules. Then $A^{\Aus} = \End_A\left(\bigoplus^n_{i=1}M_i\right)$ is the Auslander algebra of $A$.  Auslander \cite{A1974} characterized
the algebras which arise this way as algebras of global dimension at most 2
and dominant dimension at least 2. Moreover, this construction of indecomposable $A$-modules induces a mutually inverse bijection between Morita
equivalence classes of representations-finite algebras and Morita equivalence classes of
Auslander algebras. This bijection is called the Auslander correspondence and the correspondence is generalized by Iyama to a higher dimensional version in \cite{I2007}. It is also known that Auslander algebras have close relations with quasi-hereditary algebras, preprojective algebras and projective quotient algebras, see \cite{CS2017, DR1989, GLS2007} for details.

The paper is devoted to study the Auslander algebra of any representation-finite string algebra. To be more precise, we propose a question as follows.
\begin{question}\rm\label{Intro}
How to describe the Auslander algebra of any representation-finite string algebra?
\end{question}

We recall the tensor product of rings, which is introduced by Frobenius and also plays an important role in the representation theory of algebras. As a special kind of tensor product of algebras, the enveloping algebra $A^e$ of $A$ is defined by $A^e = A\otimes_{\kk}A^{\op}$, where $A^{\op}$ is the opposite algebra of $A$ (see \cite{CE1956} for more details). It is worth nothing that any $(A,A)$-bimodule $M={_AM_A}$ is natural right $A^e$-module $M=M_{A^e}$ by
$$M_{A^e}\times A^e\longrightarrow M_{A^e},\ \ m\cdot(a\otimes a'):=a'ma. $$

In order to answer Question \ref{Intro}, we first introduce a class of finite dimensional algebra whose quiver $Q$ can be
``stacked'' by some diamond quivers.
We call the above class of algebras Generalized diamond-stacked (=GDS) algebras. Meanwhile, we also introduce the notion of gluing algebras (See Definition \ref{def:GDS} and \ref{Gluing algebras} for details on GDS and gluing algebras).
Let $\dA_n$ be the path algebra $\kk\dAA_n$ with $\dAA_n=$
\[1 \longrightarrow 2 \longrightarrow \cdots \longrightarrow n-1 \longrightarrow n\]
and $\dA_n^e = \dA_n\otimes_{\kk}\dA_n^{\op}$ its enveloping algebra. In this paper, we show the following result, which characterize the Auslander algebra of any representation-finite string algebra.

\begin{theorem} {\rm(Proposition \ref{prop:Aus-is-GDS} and Theorem \ref{thm:main})}\label{1}
The Auslander algebra $A^{\Aus}$ of arbitrary representation-finite string algebra $A$ is a GDS algebra. Moreover,  $A^{\Aus}$ is a quotient of a \gluing algebra of $\dA_n^e = \dA_n\otimes_{\kk}\dA_n^{\op}$.
\end{theorem}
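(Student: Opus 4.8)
The plan is to reduce the whole statement to the combinatorics of the Auslander--Reiten (AR) quiver together with its mesh relations, and then to match that combinatorics against the diamond grid carried by $\dA_n^e$. First I would invoke the classical description of the Auslander algebra of a representation-finite algebra \cite{A1974}: $A^{\Aus}$ has global dimension at most two and dominant dimension at least two, and concretely its Gabriel quiver is the AR quiver $\Gamma_A$ of $A$, with relation ideal generated by the mesh relations of the almost split sequences. Thus proving that $A^{\Aus}$ is a GDS algebra (Proposition \ref{prop:Aus-is-GDS}) amounts to showing that $\Gamma_A$, together with its mesh relations, is a diamond-stacked quiver in the sense of Definition \ref{def:GDS}.

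For the GDS assertion I would use the Butler--Ringel combinatorics \cite{BR1987}: since $A$ is a representation-finite string algebra, every indecomposable is a string module, every irreducible morphism is the addition or deletion of a hook or a cohook, and the four arrow classes organise the vertices of $\Gamma_A$ so that each non-projective non-injective vertex $M$ lies in a mesh $\tau M \to E_1 \oplus E_2 \to M$ with exactly two middle terms. Each such mesh is a commutative square, i.e. a diamond, and the mesh relation is precisely the associated commutativity relation; the projective and injective vertices yield meshes with a single middle term, that is, boundary (degenerate) diamonds. Checking that this assembly of diamonds satisfies Definition \ref{def:GDS} gives the first statement.

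For the second assertion I would first identify $\dA_n^e$ explicitly: its quiver is the path-quiver of $\dAA_n \times \dAA_n^{\op}$, the $n\times n$ triangular grid with vertices $(i,j)$, horizontal arrows from the left factor and vertical arrows from the $\op$ factor, modulo the relations making every unit square commute. This is exactly the universal diamond grid of size $n$, so $\dA_n^e$ is itself a GDS algebra all of whose diamonds are genuine squares. The key observation is that each maximal \emph{direct} portion of $\Gamma_A$ --- the part swept out by repeatedly adding hooks or cohooks to a fixed string in one direction --- is isomorphic, as a mesh quiver, to a subgrid of some $\dA_n^e$, because the two independent move-types (substring versus quotient-string deformations) correspond to the two tensor factors $\dA_n$ and $\dA_n^{\op}$. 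I would then show that $\Gamma_A$ is obtained by gluing such grid pieces along their shared boundaries at the turning points of the strings, which is precisely the gluing algebra construction of Definition \ref{Gluing algebras} applied to $\dA_n^e$ (with $n$ bounded by the length of the longest string). Finally the commutativity relations of the glued copies map onto the mesh relations of $A^{\Aus}$, while the degenerate meshes at projectives and injectives together with the zero relations inherited from $A$ realise $A^{\Aus}$ as a genuine quotient, yielding Theorem \ref{thm:main}.

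The main obstacle is the relation bookkeeping. That the underlying quiver of $A^{\Aus}$ is glued from grids is essentially combinatorial; the delicate point is to prove that the relation ideal of $A^{\Aus}$ is exactly the image of the commutativity relations of the gluing algebra of $\dA_n^e$ together with the explicit quotient relations --- no mesh relation may be missing, and the gluing must not introduce spurious identifications. This forces a careful local analysis at the turning points of the strings, where the direction of hook/cohook addition reverses and two grid pieces meet: there one must verify that the two families of commutative squares glue compatibly and that the correspondence between the arrow-types and the two tensor factors $\dA_n$, $\dA_n^{\op}$ is preserved across the seam.
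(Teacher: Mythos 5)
Your first half coincides with the paper's own argument: Proposition \ref{prop:Aus-is-GDS} is proved exactly as you sketch, by reading off from Theorem \ref{thm:BR1987} that every mesh of the Auslander--Reiten quiver is a (possibly degenerate) diamond, so that the quiver of $A^{\Aus}$ arises from a \DS\ quiver by deleting vertices, with the ideal generated by commutativity (mesh) and zero relations. Where you genuinely diverge is the second half. The paper never decomposes $\Gamma(\modcat A)$ into ``maximal direct portions''; instead it develops a general structure theory in Section 3 --- Lemma \ref{lemm:ideal} and Lemma \ref{lemm:supplement} for the relation bookkeeping, Proposition \ref{prop:GDSisquotient} reducing an arbitrary \GDS\ algebra to a quotient of a gluing algebra of a \PDS\ algebra, and Lemma \ref{lemm:NPDSisquotient2} with Remark \ref{rmk:NPDSisquotient2} embedding any \PDS\ quiver into a single standard grid --- so that Theorem \ref{thm:main} becomes a one-line citation of Proposition \ref{prop:GDSisquotient2} together with $\dA_n\cong\dA_n^{\op}$. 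Your route is more concrete: it ties the two grid directions directly to the string combinatorics (operations on the left end versus the right end of a string as the two tensor factors), and would yield a more explicit embedding and bound on $n$. The paper's route buys generality and reuse: the reduction applies to every \GDS\ algebra, not only to Auslander algebras of string algebras, and it is precisely where the ``delicate relation bookkeeping'' you defer is actually carried out --- Lemma \ref{lemm:supplement} is exactly the statement that supplements are compatible with passing to quotients, which is the step your sketch leaves open.

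Three points in your outline need repair if you execute it. First, the Gabriel quiver of $A^{\Aus}$ is the \emph{opposite} of $\Gamma(\modcat A)$ (harmless, but it must be said). Second, meshes with a single middle term are not confined to projective or injective vertices: already for $\dA_3$ the almost split sequence ending at the simple $S_2$, which is neither projective nor injective, has one middle term, so degenerate diamonds occur throughout the quiver, not only at its ``boundary''. Third, vertex identifications are forced not only at turning points of strings but also by $\tau$-periodic configurations; e.g.\ the representation-finite string algebra $\kk[x]/(x^2)$ has a cyclic Auslander--Reiten quiver, which is why Definition \ref{def:GDS} must include the glued (\NPDS) quivers of Figure \ref{fig:NPDSquiver}. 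None of these is fatal --- the gluing-algebra formalism of Definition \ref{Gluing algebras} that you invoke absorbs all of them --- but your stated heuristic for where gluings occur would miss the periodic case.
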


As applications, we consider the Auslander algebras of $\kk$-algebras for type $\protect\dAA_n$ and type $\dDD_n$. Where $\dDD_n$ ($n\ge 4$) denote the Dynkin quiver of type $\D$ with linearly orientation
\[\xymatrix{  & 2 \ar[d]^{a_2} & & &\\
1 \ar[r]^{a_1}& 3\ar[r]^{a_3} & 4 \ar[r] & \cdots \ar[r]^{a_{n-1}} & n, }\]
and $\dD_n = \kk\dDD/\langle a_2a_3\rangle$.  By Theorem \ref{1}, we obtain the following result.

\begin{corollary} {\rm(Examples \ref{exp:A} and \ref{exp:D})}
$\dA_n$ and $\dD_n$ are string algebras, whose Auslander algebras are quotients of $\dA_n^e$.
\end{corollary}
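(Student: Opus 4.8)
The plan is to verify the hypotheses of Theorem \ref{1} for these two families and then to show that in each case the \gluing is trivial, so that the \gluing algebra of $\dA_n^e$ collapses to $\dA_n^e$ itself. First I would check that $\dA_n$ and $\dD_n$ are string algebras directly from the axioms. For $\dA_n=\kk\dAA_n$ the quiver is linearly oriented, so exactly one arrow enters and one leaves each interior vertex, the ideal is zero (hence admissible, the quiver being finite and acyclic), and the compatibility conditions on consecutive arrows are vacuous; thus $\dA_n$ is a string algebra. For $\dD_n=\kk\dDD_n/\langle a_2a_3\rangle$ the only branch vertex is $3$, where two arrows $a_1,a_2$ end and one arrow $a_3$ starts, so at most two arrows meet every vertex; the ideal is generated by the single path $a_2a_3$, hence is monomial and admissible; and the branching condition holds since among the length-two paths $a_1a_3,a_2a_3$ only $a_1a_3\notin\langle a_2a_3\rangle$, so each arrow has at most one nonzero successor and predecessor. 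Both algebras are representation-finite, their underlying graphs being Dynkin of type $\A$ and $\D$.

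Since $\dA_n$ and $\dD_n$ are representation-finite string algebras, Theorem \ref{1} (i.e.\ Proposition \ref{prop:Aus-is-GDS} and Theorem \ref{thm:main}) applies and shows that $\dA_n^{\Aus}$ and $\dD_n^{\Aus}$ are GDS algebras, each a quotient of a \gluing algebra of $\dA_n^e$. To upgrade ``\gluing algebra of $\dA_n^e$'' to ``$\dA_n^e$'' it suffices to show that the relevant diamond-stacking uses a \emph{single} diamond, that is, that the whole Auslander--Reiten (AR) quiver fits inside one copy of the grid quiver of $\dA_n^e=\dAA_n\times\dAA_n^{\op}$. I would establish this by exhibiting the AR quiver, together with its mesh relations, explicitly.

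For $\dA_n$ the indecomposables are the interval modules $M[i,j]$ with $1\le i\le j\le n$, and knitting shows the irreducible maps form the two families $M[i+1,j]\to M[i,j]$ and $M[i,j]\to M[i,j-1]$. Sending $M[i,j]\mapsto(n+1-i,j)$ identifies the AR quiver with a triangular region of the $n\times n$ grid underlying $\dA_n^e$, matching the grid orientation. Under this identification the mesh relations coincide (after a sign normalization of the arrows) with the commutativity relations of the enveloping algebra restricted to that region: interior meshes are literally the commutativity squares, while the boundary meshes arise because one of the two commuting paths of $\dA_n^e$ runs through a deleted vertex and hence vanishes. Consequently $\dA_n^{\Aus}\cong\dA_n^e/\langle e\rangle$, where $e$ is the sum of the idempotents at the complementary triangle, which is a quotient of $\dA_n^e$. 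For $\dD_n$ I would run the same programme: list the $\tfrac12(n^2-n+6)$ string modules, compute the AR quiver by knitting (the relation $a_2a_3=0$ governing the shape near the branch), embed it as a subquiver of the grid of $\dA_n^e$ (note $\tfrac12(n^2-n+6)\le n^2$ for $n\ge4$, so it does fit into a single diamond), and check that the mesh relations are the induced commutativity relations, so that again $\dD_n^{\Aus}$ is $\dA_n^e$ modulo the ideal generated by the complementary idempotents.

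The main obstacle is the $\dD_n$ computation. One must produce the AR quiver with the correct combinatorics around the branch vertex forced by $a_2a_3=0$, realize it inside a single grid $\dAA_n\times\dAA_n^{\op}$ so that no nontrivial gluing of several diamonds is needed, and match mesh relations with commutativity relations with the correct signs and correct boundary behaviour. Confirming that a single diamond suffices---rather than a genuine stacking---is precisely the point that distinguishes this corollary from the general Theorem \ref{1}, and is what is carried out in detail in Examples \ref{exp:A} and \ref{exp:D}.
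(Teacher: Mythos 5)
Your proposal is correct and takes essentially the same route as the paper: Examples \ref{exp:A} and \ref{exp:D} prove the corollary precisely by identifying the Auslander--Reiten quiver of $\dA_n$ (resp.\ $\dD_n$) with the grid quiver of $\dA_n^e$ with a set of vertices deleted, so that $\dA_n^{\Aus}\cong\dA_n^e/\J_{\A_n}$ and $\dD_n^{\Aus}\cong\dA_n^e/\J_{\D_n}$, both ideals being generated by sums of idempotents exactly as in your ``complementary triangle'' description, with the mesh relations becoming the induced commutativity relations (zero relations at boundary meshes coming from paths through deleted vertices). Two cosmetic points: the detour through Theorem \ref{1} is unnecessary scaffolding (the paper's examples never invoke it, since the explicit identification already yields the result), and your coordinate map should read $M[i,j]\mapsto(n+1-i,\,n+1-j)$ rather than $M[i,j]\mapsto(n+1-i,\,j)$ in order to match the arrow orientations of the quiver in \Pic\ref{fig:Am-tensor-An}(2) and the ideal $\J_{\A_n}$ as written in the paper.
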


This paper is organized as follows. In Section 2 we recall some preliminaries of
string algebras. In Section 3 we introduce the notions of GDS and gluing algebras and prove
some conclusions that we need to prove the main results. In Section 4, we prove Theorem \ref{1}.
In the last section, we give some applications.

 \section{Preliminaries}
\subsection{String algebras}

In this part, we recall some concepts for string algebras.
We always assume that $\kk$ is an algebraically closed field in our paper, and a quiver is a quadruple $\Q = (\Q_0, \Q_1, \source, \target)$,
where $\Q_0$ is the set of vertices, $\Q_1$ is the set of arrows, and $\source$ and $\target$ are functions $\Q_1 \to \Q_0$ sending any arrow $a\in \Q_1$ to its source and target, respectively.
Furthermore, we denote by $\Q_{\ell}$ the set of all paths of length $\ell$ (thus, naturally, $\Q_0$ is a set of all paths of length zero and $\Q_1$ is that of all paths of length one)
and write $\Q_{\ge t} = \bigcup\limits_{\ell=t}^{+\infty}\Q_{\ell}$.
If $a$ and $b$ are arrows such that $\target(a)=\source(b)$, the composition of $a$ and $b$ is denoted by $ab$.
All algebras in this paper are finite dimensional $\kk$-algebras, and, for any algebra $A$, all modules we consider are finitely generated right $A$-modules.

A {\defines monomial algebra} is a finite dimensional $\kk$-algebra which is Morita equivalent to $\kk\Q/\I$ where $\I$ is generated by some paths of length $\ge 2$. The bound quiver $(\Q, \I)$ of monomial algebra is called a {\defines monomial pair}.
Furthermore, we say a monomial pair $(\Q, \I)$ is a {\defines string pair} if it satisfies the following conditions:
\begin{itemize}
  \item[(S1)$_{\color{white}\Right}$]
     Any vertex of $\Q$ is the source of at most two arrows and the target of at most two arrows.
  \item[(S2)$_\Right$] For each arrow $\alpha:x\to y$, there is at most one arrow $\beta$
   whose source $\source(\beta)$ is $y$ such that $\alpha\beta\notin\I$.
  \item[(S2)$_\Left$] For each arrow $\alpha:x\to y$, there is at most one arrow $\gamma$
   whose target $\target(\gamma)$ is $x$ such that $\gamma\alpha\notin\I$.
\end{itemize}

\begin{definition}\rm
A finite-dimensional algebra $A$ is called a {\defines string algebra} if $A$ is Morita equivalent to $\kk\Q/\I$ where $(\Q, \I)$ is a string pair.
\end{definition}

\begin{example}\rm We provide some instances for string algebra.
\begin{itemize}
  \item[(1)]
  Gentle algebras are {string algebras}.
  Indeed, a finite-dimensional algebra $A$ is said to be {\defines gentle} if it is Morita equivalent to $\kk\Q/\I$ such that the following conditions hold.
  \begin{itemize}
    \item (S1), (S2)$_{\Right}$, (S2)$_{\Left}$.
    \item For each arrow $\alpha:x\to y$, there is at most one arrow $\beta$
       whose source $\source(\beta)$ is $y$ such that $\alpha\beta\in\I$.
    \item For each arrow $\alpha:x\to y$, there is at most one arrow $\gamma$
       whose target $\target(\gamma)$ is $x$ such that $\gamma\alpha\in\I$.
    \item All generators of $\I$ are paths of length two.
  \end{itemize}

  \item[(2)] Assume that $\dAA_n$ is the quiver of the form
    \[ 1 \longrightarrow 2 \longrightarrow \cdots \longrightarrow n-1 \longrightarrow n. \]
  For any admissible ideal $\I$, $\kk\dAA_n/\I$ is string.
  In particular, if $\I=0$, then $\kk\dAA_n/\I = \kk\dAA_n$, denoted by $\dA_n$ in our paper, is a hereditary algebra;
  if $\I = \rad^2(\kk\dAA_n)$, then $\kk\dAA_n/\I$, denoted by $\dA_n/\mathrm{r}^2$ in our paper, is gentle.
\end{itemize}
\end{example}

In \cite{BR1987}, Butler and Ringel described all indecomposable modules over string algebras and all Auslander-Reiten sequences by quiver methods. The following theorem is one of the works in their paper.

\begin{theorem}[{The irreducible morphisms over string algebras \cite[Section 3]{BR1987}}] \label{thm:BR1987}
Any Auslander-Reiten sequence in the finitely generated module category of string algebra is of the form
\[L \mathop{\longrightarrow}\limits^{\left[{^{f_1}_{f_2}}\right]} M_1\oplus M_2
    \mathop{\longrightarrow}\limits^{\left[ g_1\ g_2\right]} N, \]
where $L$, $M_1$, $M_2$ and $N$ are indecomposable, at least one of $M_1$ and $M_2$ is non-zero,
and $f_1$, $f_2$, $g_1$, $g_2$ are either irreducible morphisms or zero.
\end{theorem}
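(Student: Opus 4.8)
The plan is to reduce the statement to a single combinatorial assertion: \emph{the middle term of any Auslander--Reiten sequence over a string algebra has at most two indecomposable direct summands.} Indeed, in any Auslander--Reiten sequence
\[ 0 \longrightarrow L \mathop{\longrightarrow}\limits^{f} E \mathop{\longrightarrow}\limits^{g} N \longrightarrow 0, \]
the modules $L$ and $N$ are indecomposable by definition, and if we write $E \cong \bigoplus_{i} M_i$ with each $M_i$ indecomposable, then every component $L \to M_i$ of $f$ and every component $M_i \to N$ of $g$ is automatically an irreducible morphism; this is a general property of almost split sequences and requires nothing special about string algebras. Thus all that remains to be proved is that the index set has cardinality at most two, so that $E = M_1 \oplus M_2$ (allowing one summand to be zero), after which the assertion about $f_1,f_2,g_1,g_2$ is immediate.

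To bound the number of summands I would invoke the Butler--Ringel classification: every indecomposable module over a string algebra is either a string module $M(w)$, attached to a walk (string) $w$ in $(\Q,\I)$, or a band module. The indecomposable summands of $E$, counted with multiplicity, correspond to the irreducible morphisms into $N$ up to scalars (equivalently, the arrows ending at $[N]$ in the Auslander--Reiten quiver), so it suffices to show that at most two irreducible morphisms arrive at each indecomposable. For string modules this is the heart of the matter: I would describe the irreducible morphisms combinatorially through the canonical operations performed at the \emph{two} ends of the string $w$, namely adding a hook or deleting a cohook on the left end, and independently on the right end. Since a string has exactly two ends and conditions (S1), (S2)$_{\Right}$, (S2)$_{\Left}$ force at most one admissible operation at each end, at most two irreducible morphisms can start at, and dually at most two can end at, any string module. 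For band modules one argues instead through the tube structure of the relevant component of the Auslander--Reiten quiver, where every vertex again has at most two incoming and two outgoing arrows.

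The main obstacle is precisely this combinatorial description of irreducible morphisms together with the proof that each end of a string admits at most one such operation. Making this rigorous is the technical core: one must verify that the canonical maps induced by adding and deleting hooks and cohooks are genuinely irreducible, that they exhaust all irreducible maps modulo the radical square, and that the string conditions (S1)--(S2) obstruct any third independent operation at either end. I expect to carry this out either by a direct analysis of $\rad/\rad^2$ of the module category rephrased in terms of string combinatorics, or by transporting the problem to a hereditary or Nakayama cover via covering theory; the mixed case of irreducible morphisms between a string module and a band module will require a separate, if routine, verification. Once the ``at most two per end'' bound is established, the decomposition $E = M_1 \oplus M_2$ and the irreducibility of the four component maps follow at once, completing the proof.
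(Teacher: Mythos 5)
The paper offers no proof of this theorem: it is imported verbatim from Butler and Ringel \cite[Section 3]{BR1987}, so there is no internal argument to compare yours against. Your outline is, in substance, exactly the strategy of that cited source --- reduce to showing the middle term has at most two indecomposable summands (the irreducibility of the component maps being general Auslander--Reiten theory), then realize the almost split sequences combinatorially via the hook/cohook operations at the \emph{two} ends of a string, with band modules handled by the tube structure --- so you have chosen the ``same'' route, and it is a correct one. Note only that what you have written is a roadmap rather than a proof: the step you yourself flag as the technical core (that the canonical maps are irreducible, that they exhaust all irreducible maps, and that conditions (S1), (S2)$_\Right$, (S2)$_\Left$ rule out a third operation at an end) is precisely the content of Butler--Ringel's Section 3, and until it is carried out the statement remains quoted, not proved.
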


%

\subsection{Generalized diamond-stacked algebras }

In this part, we introduce a class of finite dimensional algebra whose quiver $\Q$ can be ``stacked'' by some diamond quivers, where {\defines diamond quiver} is a quiver with the underlying graph $\Square{i}{j} = $
\[\xymatrix{
  \vertex{i}{j+1} \ar@{-}[r]^{a_{i,j}^{\Upper}} \ar@{-}[d]_{a_{i,j}^{\Left}}
& \vertex{i+1}{j+1} \ar@{-}[d]^{a_{i,j}^{\Right}} \\
  \vertex{i}{j} \ar@{-}[r]_{a_{i,j}^{\Down}}
& \vertex{i+1}{j}
}\]
It can be used to describe the Auslander-Reiten quiver of string algebra.
Moreover, for any algebra $A=\kk\Q/\I$, we denote by $\e^A_v$ (or $\e_v$ for short) the idempotent of $A$ corresponding to the vertex $v\in \Q_0$.
For the sake of simplicity, the empty quiver $\varnothing = (\varnothing, \varnothing, \source, \target)$ is called a {\defines trivial diamond quiver} in this paper.
\checkLiu{In particular, a {\defines commutative diamond quiver} in given bound quiver $(\Q, \I)$ is a diamond quiver
\[\xymatrix{
  \vertex{i}{j+1} \ar[r]^{a_{i,j}^{\Upper}} \ar[d]_{a_{i,j}^{\Left}}
& \vertex{i+1}{j+1} \ar[d]^{a_{i,j}^{\Right}} \\
  \vertex{i}{j} \ar[r]_{a_{i,j}^{\Down}}
& \vertex{i+1}{j}
}\]
such that $a_{i,j}^{\Upper}a_{i,j}^{\Right} - a_{i,j}^{\Left}a_{i,j}^{\Down} \in \I$.
}

We say an underlying graph is a {\defines horizontal connection} of two diamond underlying graphs $\Square{i}{j}$ and $\Square{\imath}{\jmath}$, if it is of the form
\[\xymatrix{
  \vertex{i}{j+1} \ar@{-}[r]^{a_{i,j}^{\Upper}} \ar@{-}[d]_{a_{i,j}^{\Left}}
& {\vertex{i+1}{j+1} \atop = \vertex{\imath}{\jmath+1}} \ar@{-}[r]^{a_{\imath,\jmath}^{\Upper}}
  \ar@{-}[d]^{a_{\imath,\jmath}^{\Left}}_{a_{i,j}^{\Right}}
& \vertex{\imath+1}{\jmath+1} \ar@{-}[d]^{a_{\imath,\jmath}^{\Right}} \\
  \vertex{i}{j} \ar@{-}[r]_{a_{i,j}^{\Down}}
& {\vertex{i+1}{j} \atop = \vertex{\imath}{\jmath}} \ar@{-}[r]_{a_{\imath,\jmath}^{\Down}}
& \vertex{\imath+1}{\jmath}
}\]
That is, $a_{i,j}^{\Right} = a_{\imath,\jmath}^{\Left}$ (thus, we have $\vertex{i+1}{j+1} = \vertex{\imath}{\jmath+1}$ and $\vertex{i+1}{j} = \vertex{\imath}{\jmath}$ in this case, or equivalently, we have $\imath = i+1$ and $\jmath = j$).
We denote by $\Square{i}{j} \sqcup^{\rmH} \Square{\imath}{\jmath}$ the horizontal connection of $\Square{i}{j}$ and $\Square{\imath}{\jmath}$ for short.
Dually, a {\defines vertical connection} of $\Square{i}{j}$ and $\Square{\imath}{\jmath}$, say $\Square{i}{j} \sqcup^{\rmV} \Square{\imath}{\jmath}$, is the underlying graph given by $a_{i,j}^{\Upper} = a_{\imath, \jmath}^{\Down}$ with $\imath=i$ and $\jmath=j+1$.

Fixing $j\in \NN$, let $\pmb{x} = (x_{i,j})_{i\in\NN}$ be a sequence with $x_{i,j} \in (\NN\times\NN)\cup\{\varnothing\}$ and $\mathrm{supp}(\pmb{x}) =\{x_{i,j}\ne\varnothing\mid i\in\NN\}$ is a finite set.
Then $\pmb{x}$ provides a sequence $(\Square{i}{j}^{\star})_{i\in \NN}$ with
\[ \Square{i}{j}^{\star} = \begin{cases}
\Square{i}{j}, & \text{ if } x_{i,j} \ne \varnothing; \\
\varnothing, & \text{ if } x_{i,j} = \varnothing.
\end{cases}\]
Let
\begin{align}\label{mapping}
 h_{\pmb{x}}: \NN \times \NN \to \{\varnothing\}\cup \{\Square{i}{j}\}_{i\in\NN}, \
 h_{\pmb{x}}(\imath, \jmath) =
 \begin{cases}
 \Square{\imath}{j}^{\star}, & \text{ if } \jmath = j; \\
 \varnothing, & \text{otherwise}.
 \end{cases}
\end{align}
be a map which is induced by the above sequence $(\Square{i}{j}^{\star})_{j\in \NN}$ such that
if $h_{\pmb{x}}(i,j)\ne\varnothing$, $h_{\pmb{x}}(\imath,j)\ne\varnothing$, and $ i\ne \imath$,
then $h_{\pmb{x}}(i,j)\ne h_{\pmb{x}}(\imath,j)$.

Thus, we can define the {\defines horizontal connection of $m$ diamond underlying graphs $\Square{1}{j}^{\star}$, $\cdots$, $\Square{m}{j}^{\star}$ corresponding to $\pmb{x}$}
($m=|\mathrm{supp}(\pmb{x})|$) by
\[{\bigsqcup\limits_{i\in\NN}}{}^{\rmH}\ h_{\pmb{x}}(i,j) =
  {\bigsqcup\limits_{i=1}^{m}}{}^{\rmH}\ \Square{i}{j}^{\star} :=
   \bigcup_{t\in T}{\bigsqcup\limits_{i = r_t}^{s_t}}{}^{\rmH} \Square{i}{j}^{\star}, \]
where all unions ``$\cup$'' in the above formula are disjoint unions;
$T$ is some index set such that $ r_1\le s_1 < r_2 \le s_2 <\cdots <r_t\le s_t$;
and $\Square{i}{j}^{\star} \ne \varnothing$ holds for all $r_t\le i\le s_t$.

If there is no confusion, we call the horizontal connection corresponding to $\pmb{x}$ by the {\defines horizontal connection} for simplicity.

Furthermore, the {\defines vertical connection}
\[\left( {\bigsqcup\limits_{i=1}^{m}}{}^{\rmH} \Square{i}{j}^{\star} \right)
 {\bigsqcup}^{\rmV}
 \left( {\bigsqcup\limits_{i=1}^{m}}{}^{\rmH} \Square{i}{j+1}^{\star} \right) \]
of two horizontal connections ${\bigsqcup\limits_{i=1}^{m}}{}^{\rmH} \Square{i}{j}^{\star}$ and ${\bigsqcup\limits_{i=1}^{m}}{}^{\rmH} \Square{i}{j+1}^{\star}$ is defined in a similar way.

\begin{example} \label{exp:connection1} \rm 
Given two sequences
\begin{center}
  $\pmb{x}^{(\mathrm{I})} = (x_{i,0})_{i\in\NN}
   = ((0,0), \varnothing, (2,0), (3,0), \varnothing, \varnothing, \cdots)$
and

  $\pmb{x}^{(\mathrm{II})} = (x_{i,1})_{i\in\NN}
   = (\varnothing, (1,1), (2,1), \varnothing, \varnothing, \varnothing, \cdots)$.
\end{center}
Then we have
\begin{center}
  $\mathrm{im}(h_{\pmb{x}^{(\mathrm{I})}}) = \{\Square{0}{0}, \Square{2}{0}, \Square{3}{0}, \varnothing\}$
and

  $\mathrm{im}(h_{\pmb{x}^{(\mathrm{II})}}) = \{\Square{1}{1}, \Square{2}{1}, \varnothing\}.$
\end{center}

Let $\overline{\Q}^{(\mathrm{I})}
  = \mathop{\bigsqcup{}^{\rmH}}\limits_{(i,j)\in \NN\times\NN}h_{\pmb{x}^{(\mathrm{I})}}((i,j))$ and $\overline{\Q}^{(\mathrm{II})}
  = \mathop{\bigsqcup{}^{\rmH}}\limits_{(i,j)\in \NN\times\NN} h_{\pmb{x}^{(\mathrm{II})}}((i,j))$ be the horizontal
connections corresponding to $\pmb{x}^{(\mathrm{I})}$ and $\pmb{x}^{(\mathrm{II})}$ respectively. Then
$ \overline{\Q}^{(\mathrm{I})}$ is of the form
    \[\xymatrix{
       \vertex{0}{1} \ar@{-}[r] \ar@{-}[d] & \vertex{1}{1} \ar@{-}[d]
     & \vertex{2}{1} \ar@{-}[r] \ar@{-}[d] & \vertex{3}{1} \ar@{-}[r] \ar@{-}[d] & \vertex{4}{1} \ar@{-}[d] \\
       \vertex{0}{0} \ar@{-}[r] & \vertex{1}{0}
     & \vertex{2}{0} \ar@{-}[r] & \vertex{3}{0} \ar@{-}[r] & \vertex{4}{0}
    }\]
and $\overline{\Q}^{(\mathrm{II})}$ is of the form
    \[\xymatrix{
      & \vertex{1}{2} \ar@{-}[r] \ar@{-}[d] & \vertex{2}{2} \ar@{-}[r] \ar@{-}[d] & \vertex{3}{2} \ar@{-}[d] &  \\
      & \vertex{1}{1} \ar@{-}[r] & \vertex{2}{1} \ar@{-}[r] & \vertex{3}{1} &
    }\]

Then the vertical connection $\overline{\Q}^{(\mathrm{I})} \bigsqcup{}{^{\rmV}} \overline{\Q}^{(\mathrm{II})} = $
\begin{center}
  \begin{tikzpicture}
  \draw (0,0) node{\xymatrix{
     & \vertex{1}{2} \ar@{-}[r] \ar@{-}[d] & \vertex{2}{2} \ar@{-}[r] \ar@{-}[d] & \vertex{3}{2} \ar@{-}[d] &  \\
       \vertex{0}{1} \ar@{-}[r] \ar@{-}[d]_{a_{0,0}^{\Left}} & \vertex{1}{1} \ar@{-}[r] \ar@{-}[d]
     & \vertex{2}{1} \ar@{-}[r] \ar@{-}[d] & \vertex{3}{1} \ar@{-}[r] \ar@{-}[d]
     & \vertex{4}{1} \ar@{-}[d]^{a_{3,0}^{\Right}} \\
       \vertex{0}{0} \ar@{-}[r] & \vertex{1}{0}
     & \vertex{2}{0} \ar@{-}[r] & \vertex{3}{0} \ar@{-}[r] & \vertex{4}{0}
    }};
  \draw [line width=7pt][blue][opacity=0.25] (-3.70, 0   ) -- ( 3.70, 0   );
  \draw [line width=7pt][blue][opacity=0.25] (-3.70,-1.35) -- (-1.40,-1.35);
  \draw [line width=7pt][blue][opacity=0.25] (-0.40,-1.35) -- ( 3.70,-1.35);
  \end{tikzpicture}
\end{center}
\end{example}

\begin{definition}[Diamond-stacked quivers]  \rm
Let $\Q$ be a quiver.
\begin{itemize}
\item[(DS1)] $\Q$ is said to be a {\defines diamond-stacked quiver of the first kind {\rm(}$=$\PDS\ quiver{\rm)} with height $1$},
if its underlying graph $\overline{\Q}$ equals to
\begin{align}\label{PDS-1}
  \bigcup_{{\theta}=1}^{m}{\bigsqcup_{i=i_{\theta}}^{i_{\theta}+r_{\theta}}}{}^{\rmH}\ \Square{i}{j},
\end{align}
where
  $i_1 \le i_1+r_1 < i_2 \le i_2+r_2 < \cdots < i_m \le i_m+r_m$ is a sequence in $\NN$, $m$ and $j$ are integers in $\NN^+$, and $r_1, \cdots, r_m\in \NN$.

  Furthermore, $\Q$ is said to be a {\defines \PDS\ quiver with height $n$ $(\ge 1)$}, if $\overline{\Q}$ equals to a vertical connection
\begin{align}\label{PDS-n}
  {\bigsqcup_{\vartheta=0}^{n-1}}{}^{\rmV} \left(
  \bigcup_{{\theta}=1}^{m}{\bigsqcup_{i=i_{\theta}}^{i_{\theta}+r_{\theta}}}{}^{\rmH}\ \Square{i}{j+\vartheta}
  \right)
\end{align}
of some underlying graphs which are of the form (\ref{PDS-1}).

\item[(DS2)] $\Q$ is said to be a {\defines diamond-stacked quiver of the second kind quiver {\rm(}$=$\NPDS\ quiver{\rm)} }
    if its underlying graph $\overline{\Q}$ can be obtained by the following method.
    \begin{itemize}
      \item 
        For two sequences $\{v_i\}_{1\le i\le l}$ and $\{w_i\}_{1\le i\le l}$ of vertices in an underlying graph $\overline{\Q}'$ given by (\ref{PDS-n}), where $\{v_i\}_{1\le i\le l} \cap \{w_i\}_{1\le i\le l} = \varnothing$,
        we glue vertices $v_i$ and $w_i$, that is, $  \Q$ is the quiver given by the bound quiver of $\kk\Q'/\langle\e_{v_i}-\e_{w_i} \mid 1\le i\le l\rangle$.
    \end{itemize}
\end{itemize}
Furthermore, we say a quiver $\Q$ is a {\defines diamond-stacked {\rm(}$=$DS{\rm)} quiver} if
its underlying graph $\overline{\Q}$ is one of (DS1) and (DS2).
\end{definition}

\begin{example} \rm  \
\begin{itemize}

\item[(1)] A quiver with underlying graph $\overline{\Q}^{(\mathrm{I})} \bigsqcup{}{^{\rmV}} \overline{\Q}^{(\mathrm{II})}$ given in Example \ref{exp:connection1} is a \PDS\  quiver with height 2.

\item[(2)] Let $\Q$ be a quiver whose underlying graph is given by $\overline{\Q}^{(\mathrm{I})} \bigsqcup{}{^{\rmV}} \overline{\Q}^{(\mathrm{II})}$ in (1) such that $\vertex{0}{1}=\vertex{4}{0}$, $\vertex{0}{0}=\vertex{4}{1}$ and $a_{0,0}^{\Left} = a_{3,0}^{\Right}$, see \Pic \ref{fig:NPDSquiver}.
  Then $\Q$ is an \NPDS\  quiver.

\begin{figure}[H]
  \centering
  \begin{tikzpicture}[xscale=1.5, yscale=1]
  \draw ( 0.0,-1.0) to[out= 180, in= -90]
        (-1.5, 0.0) to[out=  90, in= 180]
        ( 0.0, 1.0) to[out=   0, in=  90]
        ( 1.0, 0.0) to[out= -90, in=   0]
        (-0.5,-2.0) to[out=-180, in= -90]
        (-3.0, 0.0) to[out=  90, in= 180]
        (-0.5, 2.0) to[out=   0, in=  45]
        ( 0.5, 0.0) to[out=  90, in= -10]
        ( 0.0, 0.5) to[out= 170, in=  90]
        (-1.0, 0.0) to[out=  90, in= -20]
        (-1.36,0.5) to[out= 160, in=  90]
        (-2.5,-0.5) to[out= -90, in= 135]
        (-2.0,-1.6);
  \draw ( 2.0, 0.0) to[out= 200, in=  10]
        ( 0.5,-0.5) to[out=  90, in= -90]
        ( 0.5, 0.0);
  \draw (-1.0,-0.8) to[out=-135, in= 180] (-0.2,-2.0);
  \draw[white] ( 0.85,-0.9) node{$\bullet$};
  \draw ( 2.0, 0.0) to[out=-120, in=   0] ( 0.0,-1.0);
  \fill[white] (-1.0,-0.8) circle(0.25);
         \draw (-1.0,-0.8) node{\small$\vertex{0}{0}$};
         \draw (-0.7,-0.8) node[above]{\small$(=\vertex{4}{1})$};
  \fill[white] ( 2.0, 0.0) circle(0.25);
         \draw ( 2.0, 0.0) node{\small$\vertex{1}{0}$};
  \fill[white] (-0.5, 2.0) circle(0.25);
         \draw (-0.5, 2.0) node{\small$\vertex{2}{0}$};
  \fill[white] (-2.1,-1.5) circle(0.25);
         \draw (-2.1,-1.5) node{\small$\vertex{3}{0}$};
  \fill[white] (-0.3,-2.0) circle(0.25);
         \draw (-0.3,-2.0) node{\small$\vertex{4}{0}$};
         \draw (-0.3,-2.0) node[below]{\small$(=\vertex{0}{1})$};
  \fill[white] (1.05,-0.4) circle(0.25);
         \draw (1.05,-0.4) node{\small$\vertex{1}{1}$};
  \fill[white] ( 0.7, 0.7) circle(0.25);
         \draw ( 0.7, 0.7) node{\small$\vertex{2}{1}$};
  \fill[white] (-1.36,0.5) circle(0.25);
         \draw (-1.36,0.5) node{\small$\vertex{3}{1}$};
  \fill[white] (-1.0, 0.0) circle(0.15);
         \draw (-1.0, 0.0) node{\small$\vertex{3}{2}$};
  \fill[white] ( 0.5, 0.0) circle(0.15);
         \draw ( 0.5, 0.0) node{\small$\vertex{2}{2}$};
  \fill[white] ( 0.5,-0.5) circle(0.15);
         \draw ( 0.5,-0.5) node{\small$\vertex{1}{2}$};
  \draw (-1.0,-1.5) node{$a_{0,0}^{\Left} = a_{3,0}^{\Right}$};
  \draw [line width=7pt][blue][opacity=0.25]
        ( 2.0, 0.0) to[out=-120, in=   0]
        ( 0.0,-1.0) to[out= 180, in= -90]
        (-1.5, 0.0) to[out=  90, in= 180]
        ( 0.0, 1.0) to[out=   0, in=  90]
        ( 1.0, 0.0) to[out= -90, in=   0]
        (-0.5,-2.0) to[out=-180, in= -90]
        (-3.0, 0.0) to[out=  90, in= 180]
        (-0.5, 2.0) ;
  \end{tikzpicture}
  \caption{The \NPDS\  underlying graph}
  \label{fig:NPDSquiver}
\end{figure}
\end{itemize}
\end{example}

\begin{definition} \rm \label{def:GDS}
Let $A=\kk\Q/\I$ be the algebra of a bound quiver $(\Q,\I)$ whose ideal $\I$ is admissible. $A$ is said to be
\begin{itemize}
  \item[(1)] a {\defines \PDS\ {\rm(}resp. \NPDS{\rm)} algebra} if $\Q$ is a \PDS\ {\rm(}resp. \NPDS{\rm)} quiver;
  \item[(2)] a {\defines \DS\ algebra} if it is either a \PDS\ algebra or a \NPDS\ algebra;
  \item[(3)] a {\defines generalized diamond-stacked {\rm(}$=$\GDS{\rm)} algebra} if there exists a DS algebra $\Lambda$ such that the following conditions hold.
      \begin{itemize}
        \item $\Q$ equals to the quiver given by the bound quiver of $\alg/\alg\e\alg$ ($\e$ is an idempotent of $\alg$);
        \item All diamond subquivers of $\Q$ are commutative in $A$.
      \end{itemize}
\end{itemize}
\end{definition}

Let $(\Q,\I)$ be a bound quiver of $A=\kk\Q/\I$ whose ideal $\I$ is admissible.
Then a quiver $\Q^{\g}$ obtained by {\defines \gluing vertices $v_i, w_i\in \Q_0$} ($i\in L, L$ is an index set) is the original quiver of $A^{\g}=A/\langle\e_{v_i}-\e_{w_i} \mid i\in L\rangle$,
and, naturally, any path on $\Q$ can be seen as a path on $\Q^{\g}$ (thus we have $v_i=w_i$ on $\Q^{\g}_0$).
During the above process, some paths on $\Q^{\g}$ crossing $v_i$ $(=w_i)$ are newly acquired by \gluing $v_i$ and $w_i$, and they equal to zero in $A^{\g}$.

For any $\mathit{\Lambda}=\kk\Q_\itLamb/\I_\itLamb$ with the generator of $\I_\itLamb$ being one of following forms:
\begin{itemize}
  \item $\e_{v_i}-\e_{w_i}$ ($i\in L$);
  \item path, say zero relation, of length $\ge 2$;
  \item a sum $\sum\limits_{t=1}^{m}\wp_t$ of $m$ ($m\ge 2$) paths of length $\ge$ 2.
\end{itemize}
Let $\idem_{\I_\itLamb}$, $\z_{\I_\itLamb}$ and $\s_{\I_\itLamb}$ respectively be the set of all generators of $\I_{\itLamb}$ of the first, second and third form as above, then  $\I_\itLamb = \langle \idem_{\I_\itLamb} \cup \z_{\I_\itLamb}\cup \s_{\I_\itLamb}\rangle$.
We denote by $\itLamb(\frakp(v_{\imath})\mid \imath\in I) $ ($I$ is an index set such that $\{v_{\imath}\mid \imath \in I\} \subseteq (\Q_\itLamb)_0$ and $I\supseteq\ L$) the $\kk$-vector space
\begin{align}
  \itLamb(\frakp(v_{\imath}) \mid \imath\in I) = \spa(S) / \J
\end{align}
\[ \left( S := \big((\Q_\itLamb)_{\ge 0}\backslash \{w_i \mid i\in I\}\big)
     \bigcup\bigcup_{\imath\in I}\frakp(v_{\imath}) \right), \]
where
\begin{itemize}
  \item $\frakp(v_{\imath})$ is the set whose element is \checkLiu{of the form $\wp_1\otimes\wp_2$, where} $\wp_1$ and $\wp_2$ are non-zero paths in $(\Q_{\itLamb}, \I_{\itLamb})$ with $\target(\wp_1), \source(\wp_2)\in\{v_{\imath}, w_{\imath}\}\subseteq(\Q_\itLamb)_0$;
  \item and $\J = \big\langle \s_{\I_\itLamb}\cup\big(\z_{\I_\itLamb} \big\backslash \bigcup\limits_{i\in I}\frakp(v_i)\big) \big\rangle$.
\end{itemize}
Notice that $\spa(S)$ is a $\kk$-algebra because the canonical bijection
\[ \sigma: S = \big((\Q_\itLamb)_{\ge 0}\backslash \{w_i\mid i\in L\}\big)
     \bigcup\bigcup_{\imath\in I}\frakp(v_{\imath}) \to (\Q^{\g}_\itLamb)_{\ge 0} \]
exists. Therefore, $\itLamb(\frakp(v_i)\mid i\in I)\cong \kk\Q^{\g}_\itLamb/\J$ is a $\kk$-algebra which can be obtained by gluing $v_i$ and $w_i$.

\begin{example}\rm
Let $A= \kk\Q/\I$ with $\Q = \xymatrix{1 \ar[r]^{a} & 2 \ar[r]^b & 3\ar[r]^c & 4}$ and $\I=\langle ab\rangle$.
Then the original quiver $\Q^{\g}$ of $A^{\g} = A/\langle\e_2-\e_4\rangle$ is
\[\xymatrix{1 \ar[r]^{a} & 2 \ar[r]^b & 3 \ar@/^1pc/[l]^c. }\]
Notice that $cb = 0$ in $A^{\g}$ because the preimage of $cb+\langle\e_2-\e_4\rangle$ under the canonical epimorphism $\nat: A \to A^{\g}$, $\nat(x) = x+\langle\e_2-\e_4\rangle$ is the path $cb$ being zero in $A$.
We have $\frakp(2)=\frakp(4) = \{a, b, c, ab, bc, cb, abc, bcb, cbc, abcb, bcbc, cbcb, \cdots\}$, and further
\begin{align}
       A^{\g}(\frakp(2))
 =\ & \spa(\{\e_1, \e_2, \e_3, a, b, c, bc\} \cup \frakp(2) )/\I^{\g} \nonumber \\
 =\ & \kk\e_1 + \kk\e_2 + \kk\e_3 + \kk a +\kk b + \kk c + \nonumber \\
    & + \kk ab + \kk bc + \kk cb + \kk abc + \kk bcb + \kk cbc + \cdots \nonumber.
\end{align}
\checkLiu{Note that the ideal $\I^{\g} = 0$}.
\end{example}

\begin{definition}\rm (\Gluing algebras) \label{Gluing algebras}
Let $A = \kk\Q/\I$ be an algebra with admissible ideal $\I$.
\begin{itemize}
  \item[(1)] An {\defines incomplete \gluing algebra} of $A$, say $A^{\g}_X$, is a quotient $A/\langle X\rangle$ with $X=\{\e_{v_i}-\e_{w_i} \mid 1\le i\le l\} \cup \{v_j \mid j\in J\}$
      ($J$ is an index set containing no $1, 2, \cdots, l$).
      We call $X$ (or $J\cup\{1,\cdots,l\}$) the {\defines \gluing index} of $A$.
  \item[(2)] Furthermore, for any subset $\frakS = \bigcup\limits_{\imath\in I}\frakp(v_{\imath}) \supseteq \bigcup\limits_{1\le i\le l}\frakp(v_i)$ ($I$ is an index set) of $(\Q_{X}^{\g})_{\ge 0}$,
    where $\Q_{X}^{\g}$ is given by the bound quiver of $A_X^{\g}$,
    we call $A^{\g}_X(\frakS)$ a {\defines \gluing algebra} of $A$ induced by $A^{\g}_X$ and $\frakS$.
    The set $\frakS$ is called the {\defines supplement} of $A^{\g}_X$ decided by $\{v_{\imath}\mid \imath \in I\}$.
\end{itemize}
\end{definition}

\begin{remark} \rm \label{rmk:gluing}\
\begin{itemize}
\item[(1)] If $X=\{\e_{v_i}-\e_{w_i}\mid 1\le i\le l\}$, then $A^{\g}_X=A^{\g}=A/\langle\e_{v_i}-\e_{w_i} \mid 1\le i\le l\rangle$.
  \item[(2)] If $X = \{v_j\mid 1\le j\le m\}$, then $A^{\g}_X = A/A\e A$, where $\e=\e_{v_1}+\cdots+\e_{v_m}$.
    In this case, the quiver $\Q^{\g}_X$ given by the bound quiver of $A^{\g}_X$
    can be obtained by deleting all vertices $v_j$ of $\Q$.
  \item[(3)] We allow $X$ and $\frakS$ to be an empty set.
    \begin{itemize}
      \item If $X = \varnothing$, then $A^{\g}_{\varnothing}(\frakS)=A(\frakS)$.
      \item If $\frakS = \varnothing$, then $A^{\g}_{X}(\varnothing)=A/\langle X\rangle$.
      \item If $X = \varnothing$ and $\frakS = \varnothing$, then $A^{\g}_{X}(\frakS)=A$.
    \end{itemize}
\end{itemize}
\end{remark}

The following lemma are very useful which are needed later on.

\begin{lemma} \label{lemm:ideal}
Let $A = \kk\Q/\I$ be a finite-dimensional algebra with ideal $\I = \langle r_1, \cdots, r_m\rangle$,
and $\J = \langle s_1, \cdots, s_n \rangle$ be an ideal of $\kk\Q$ {\rm(}$s_1, \cdots, s_n$ are paths on $\Q${\rm)},
then
\[ A/\langle s_j + \I \mid 1\le j\le n\rangle \ (= (\kk\Q/\I)/\langle s_j + \I \mid 1\le j\le n\rangle )\ \cong \kk\Q/\langle \I, \J\rangle. \]
\end{lemma}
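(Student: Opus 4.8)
The plan is to exhibit the left-hand side as the quotient of $\kk\Q$ by the kernel of a single surjection and then identify that kernel with $\langle\I,\J\rangle$, so that the isomorphism drops out of the first isomorphism theorem for $\kk$-algebras.

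First I would let $\pi\colon\kk\Q\to A=\kk\Q/\I$ be the canonical projection, with $\ker\pi=\I$, and let $\rho\colon A\to A/\langle s_j+\I\mid 1\le j\le n\rangle$ be the quotient map. The composite $\Phi=\rho\circ\pi\colon\kk\Q\to A/\langle s_j+\I\mid 1\le j\le n\rangle$ is a surjective homomorphism of $\kk$-algebras, so by the first isomorphism theorem it suffices to prove $\ker\Phi=\langle\I,\J\rangle$; here $\langle\I,\J\rangle$ is the two-sided ideal of $\kk\Q$ generated by $\I\cup\J$, and since $\I$ and $\J$ are already ideals this coincides with $\I+\J$.

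The heart of the argument is this kernel computation, carried out by two inclusions. For $\langle\I,\J\rangle\subseteq\ker\Phi$, note that each $r_i\in\I=\ker\pi$ gives $\Phi(r_i)=0$, while each $s_j$ gives $\Phi(s_j)=\rho(s_j+\I)=0$ by the very definition of $\rho$; as $\ker\Phi$ is a two-sided ideal containing all the generators $r_i,s_j$, it contains $\langle\I,\J\rangle$. For the reverse inclusion, take $x\in\ker\Phi$, so that $\pi(x)\in\langle s_j+\I\mid j\rangle$ and hence $\pi(x)=\sum_k\bar a_k\,(s_{j_k}+\I)\,\bar b_k$ for finitely many $\bar a_k,\bar b_k\in A$. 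Using surjectivity of $\pi$ I would lift each $\bar a_k,\bar b_k$ to $a_k,b_k\in\kk\Q$; then $x-\sum_k a_k s_{j_k} b_k\in\ker\pi=\I$, while $\sum_k a_k s_{j_k}b_k\in\J$, so $x\in\I+\J=\langle\I,\J\rangle$. This establishes the desired equality of kernels.

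I expect the only point requiring care to be the bookkeeping in the second inclusion: one must keep the ideal $\langle s_j+\I\mid j\rangle$ generated inside $A$ distinct from the ideal $\J$ generated inside $\kk\Q$, and invoke the surjectivity of $\pi$ to lift the $A$-coefficients $\bar a_k,\bar b_k$ back to $\kk\Q$. Beyond that the statement is a formal instance of the correspondence between ideals of $\kk\Q$ containing $\I$ and ideals of $A$, so no combinatorial or quiver-theoretic input is needed.
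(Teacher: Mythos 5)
Your proof is correct, but it is organized differently from the paper's. Writing $\J'=\langle s_j+\I \mid 1\le j\le n\rangle$, the paper constructs an explicit map in the opposite direction, $f\colon (\kk\Q/\I)/\J' \to \kk\Q/\langle\I,\J\rangle$, $(a+\I)+\J' \mapsto a+\langle\I,\J\rangle$, and then checks by hand that $f$ is well defined, surjective, and injective; you instead run the first isomorphism theorem on the composite surjection $\Phi=\rho\circ\pi\colon \kk\Q \to (\kk\Q/\I)/\J'$ and compute $\ker\Phi=\langle\I,\J\rangle=\I+\J$. The two arguments are mirror images of each other: your harder inclusion $\ker\Phi\subseteq\I+\J$ (the lifting of coefficients $\bar a_k,\bar b_k$ from $A$ back to $\kk\Q$) is exactly the content of the paper's well-definedness check, where it is asserted without detail that $(a-a')+\I\in\J'$ forces $a-a'\in\langle\I,\J\rangle$; and your easy inclusion $\langle\I,\J\rangle\subseteq\ker\Phi$ corresponds to the paper's injectivity step, where an element $x\in\langle\I,\J\rangle$ is split as a sum of an element of $\I$ and an element of $\J$ and pushed into $\J'$. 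What your arrangement buys is that well-definedness is automatic, since your map originates from $\kk\Q$ rather than from a quotient, and the one genuinely nontrivial point (the lifting) is forced into the open; what the paper's arrangement buys is an explicitly written isomorphism $(a+\I)+\J' \leftrightarrow a+\langle\I,\J\rangle$ in the direction in which it is later used. Both proofs are complete and elementary instances of the correspondence between ideals of $\kk\Q$ containing $\I$ and ideals of $A$.
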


\begin{proof}
Let $\langle s_j + \I \mid 1\le j\le n\rangle = \J'$, and define
\begin{align}\label{formula:ideal}
 f: (\kk\Q/\I)/\J' \to \kk\Q/\langle\I,\J\rangle, \\
 (a+\I)+\J' \mapsto a+\langle\I,\J\rangle. \nonumber
\end{align}
Then for any $(a+\I)+ \J' = (a'+\I)+ \J'$, we have $(a+\I) - (a'+\I) = (a-a')+\I$, as an element in $A$, belongs to $\J'$, that is,
$(a-a')+\I \in \J'$. So $(a-a')\in \langle\I,\J\rangle$
and $(a+\langle\I,\J\rangle) - (a'+\langle\I,\J\rangle) = (a-a')+\langle\I,\J\rangle = \langle\I,\J\rangle$, $f$ is well-defined.

Next, we show that $f$ is bijective. It is easy to see that $f$ is an epimorphism because arbitrary element $a + \langle\I,\J\rangle$ in $\kk\Q/\langle\I,\J\rangle$ has a preimage $(a + \I) + \J'$.
On the other hand, for any $(x + \I) + \J' \in \ker f$, we have $x \in \langle\I,\J\rangle$, that is,
\begin{align}
x & = \lambda x_{\I} + \mu x_{\J}
\ (\text{ for some } \lambda,\mu \in \kk \text{ and } x_{\I}\in\I, x_{\J}\in\J).  \nonumber
\end{align}
Then we have $x+\I = \mu x_{\J} + \I = \mu(x_{\J} + \I) \in \J'$.
Thus, $(x+\I)+\J'$ is zero in $A/\J$, and so $f$ is monomorphic.
\end{proof}
\section {Characterization of
generalized diamond-stacked algebras}
\begin{notation}\color{red} \rm
\checkLiu{For any algebra $A=\kk\Q/\I$ with $\I=\langle r_i \mid i\in I \rangle$ and arbitrary ideal $\J=\langle s_j \mid j\in J\rangle$ of $\kk\Q$ ($I$ and $J$ are index sets and $\{r_i \mid i\in I\}\cap \{s_j\mid j\in J\}$ $=\varnothing$),
we denote by $A/\J$ the quotient $A/\langle s_j+\I \mid j\in J\rangle$ of $A$ for short. }
\end{notation}


\begin{lemma} \label{lemm:supplement}
Let $A = \kk\Q/\I$ be a finite-dimensional algebra with ideal $\I$
and $\J = \langle s_1, \cdots, s_n \rangle$ be an ideal of $\kk\Q$.
If $\frakp(v)$, the set of all paths on $(\Q, \I)$ crossing $v\in \Q_0$, contains no $s_j$ $(1\le j\le n)$,
then $\frakp(v)$ can be seen as a set of some paths on $(\Q, \langle\I,\J\rangle)$, and
\[(A/\J)(\frakp(v)) \cong A(\frakp(v))/\J. \]
\end{lemma}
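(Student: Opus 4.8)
The plan is to unwind both sides to presentations of the form $\spa(S)/(\text{ideal})$ and then identify them. First I would invoke Lemma \ref{lemm:ideal} to replace $A/\J$ by the bound-quiver algebra $\kk\Q/\langle\I,\J\rangle$; writing $B := A/\J = \kk\Q/\langle\I,\J\rangle$, its relation ideal has generating set whose zero relations are $\z_\I\cup\{s_1,\dots,s_n\}$, whose sum relations are $\s_\I$, and with no idempotent relations. I would then record the hypothesis in the form it is actually used: since $\frakp(v)$ contains none of the $s_j$, each $s_j$ is a path that does not pass through $v$ at all (neither as an interior vertex nor as an endpoint that would split it into two nonzero factors meeting at $v$). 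This is precisely what lets $\frakp(v)$, originally read off from $(\Q,\I)$, be reread verbatim as a set of paths through $(\Q,\langle\I,\J\rangle)$, which is the first assertion of the lemma.

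Next I would expand the two supplement constructions side by side. On one hand $B(\frakp(v)) = \spa(S_B)/\langle \s_\I\cup((\z_\I\cup\{s_j\})\setminus\frakp(v))\rangle$ with $S_B = (\Q_{\ge0}\setminus\{v\})\cup\frakp(v)_B$, where $\frakp(v)_B$ records the crossing tensors $\wp_1\otimes\wp_2$ whose factors are nonzero in $B$. On the other hand, applying Lemma \ref{lemm:ideal} a second time to $A(\frakp(v)) = \spa(S_A)/\langle\s_\I\cup(\z_\I\setminus\frakp(v))\rangle$, I obtain $A(\frakp(v))/\J \cong \spa(S_A)/\langle \s_\I\cup(\z_\I\setminus\frakp(v))\cup\{s_1,\dots,s_n\}\rangle$, with $S_A$ the analogous spanning set computed in $(\Q,\I)$. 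Since $B$ is a quotient of $A$ we have $\frakp(v)_B\subseteq\frakp(v)_A$ and hence $S_B\subseteq S_A$, while $s_j\notin\frakp(v)$ gives $\{s_j\}\setminus\frakp(v)_B=\{s_j\}$; so the two generating sets already agree on the $\s_\I$ and the $s_j$ parts, and only the $\z_\I$ portion together with the ambient spans $S_A$ versus $S_B$ can differ.

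The crux — and the step I expect to be the main obstacle — is to show these two remaining discrepancies cancel, i.e. that the two right-hand sides coincide as quotients. I would argue that a crossing tensor $\wp_1\otimes\wp_2\in S_A\setminus S_B$ is exactly one in which some factor, say $\wp_1$, is nonzero in $A$ but zero in $B$; such a $\wp_1$ must contain some $s_j$ as a subpath, and since $s_j$ does not pass through $v$ this occurrence lies entirely inside $\wp_1$, so $\wp_1\otimes\wp_2$ is divisible by $s_j$ on the left factor and therefore lies in $\langle\dots,s_j\rangle$. Hence quotienting $\spa(S_A)$ by $\langle\dots,s_j\rangle$ annihilates precisely $S_A\setminus S_B$ and leaves the span $\spa(S_B)$. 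The same mechanism absorbs the extra zero relations: a generator $r\in\z_\I$ lying in $\frakp(v)_A\setminus\frakp(v)_B$ — so that it is withheld on the $A$-side but imposed on the $B$-side — again has a factor containing some $s_j$, whence $r\in\langle\dots,s_j\rangle$ already, while any $r\in\z_\I$ withheld on both sides matches directly. Thus the two quotient ideals coincide inside the common span, and the canonical bijection $\sigma$ on basis elements descends to the desired isomorphism $(A/\J)(\frakp(v))\cong A(\frakp(v))/\J$. Checking that $\sigma$ is multiplicative, and that the divisibility-on-one-factor claim is airtight once the tensor bookkeeping at $v$ is tracked, is where the real care is needed.
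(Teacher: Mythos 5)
Your route is genuinely different from the paper's: the paper proves the isomorphism by writing down an explicit map $f:(A/\J)(\frakp(v))\to A(\frakp(v))/\J$ on cosets, checking well-definedness as in Lemma \ref{lemm:ideal}, observing surjectivity, and proving injectivity by splitting a kernel element into its part supported off $\frakp(v)$ and its part supported on $\frakp(v)$ and using the hypothesis to force the latter to vanish. You instead apply Lemma \ref{lemm:ideal} twice and try to identify the two sides as quotients of spanning sets by matching generating sets of ideals. That strategy could in principle work, but the step you yourself single out as the crux is wrong as stated.

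The claim ``a factor $\wp_1$ that is nonzero in $A$ but zero in $B=A/\J$ must contain some $s_j$ as a subpath'' is false unless $\I$ is a monomial ideal. Being zero in $B$ only means $\wp_1\in\I+\J$, and when $\I$ contains sum relations (the commutative relations in $\s_{\I}$, which are exactly the relations present in every application of this lemma in the paper, e.g.\ the $c_i$ in Proposition \ref{prop:GDSisquotient}), membership in $\I+\J$ need not arise from subpath divisibility. Concretely, take $\Q$ with arrows $a:1\to 2$, $b:2\to 3$, $a':1\to 2'$, $b':2'\to 3$, $c:3\to v$, $d:v\to 4$, with $\I=\langle ab-a'b',\ a'b'cd\rangle$ and $\J=\langle ab\rangle$; the hypothesis of the lemma holds since $ab$ avoids $v$. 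Here $a'b'c$ is nonzero in $A$, zero in $B$, and contains no $s_j$; the withheld zero relation $a'b'cd\in\z_{\I}\cap\frakp(v)$ is likewise not divisible by $ab$ on either factor, and it lands in the $A$-side ideal only through the identity $a'b'cd=(ab)cd-(ab-a'b')cd$, i.e., by routing through the sum relation. So both of your absorption claims (for $S_A\setminus S_B$ and for the extra zero relations) fail as argued, even though the conclusion is true in this example. To repair the argument you must work with membership in $\I+\J$ rather than divisibility, and show---for instance by a source/target-graded argument exploiting that no $s_j$ meets $v$---that any such membership can be rewritten using only $\s_{\I}$, the non-crossing zero relations, and the $s_j$; this is precisely the content your proof is missing, and it is the point where the paper's coset computation, rather than any divisibility statement, carries the load.
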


\begin{proof}
Let $\langle s_j + \I \mid 1\le j\le n\rangle = \J'$, and define
\[f: (A/\J)(\frakp(v)) \to A(\frakp(v))/\J, \]
\begin{center}
  $((a+\I)+ \J') \pmb{+} \sum\limits_{\wp\in\frakp(v)} k_{\wp}((\wp+\I)+\J')
  \mapsto \big((a+\I) \pmb{+} \sum\limits_{\wp\in\frakp(v)} k_{\wp}(\wp+\I)\big)+\J'$
\end{center}
($k_{\wp} \in \kk$ holds for all $\wp \in (\Q,\I)$, and ``$\pmb{+}$'' and ``$+$'' are two different types of addition).
The proof of the well-definability of $f$ is similar to that of (\ref{formula:ideal}).

Next, we show that $f$ is bijective. First of all, it is easy to see that $f$ is an epimorphism
because arbitrary element $\big((a+\I) \pmb{+} \sum\limits_{\wp\in\frakp(v)} k_{\wp}(\wp+\I)\big)+\J'$ in $A(\frakp(v))/\J$
has a preimage $((a+\I) + \J') \pmb{+} \sum\limits_{\wp\in\frakp(v)} k_{\wp}((\wp+\I)+\J')$ in $(A/\J)(\frakp(v))$.

\checkLiu{
On the other hand, for any $((x+\I)+ \J') \pmb{+} \sum\limits_{\wp\in\frakp(v)} k_{\wp}((\wp+\I) + \J')$ in $\ker f$, we have
$(x+\I) + \Big(\sum\limits_{\wp\in\frakp(v)} (k_{\wp}\wp+ \I)\Big) \in \J'.$
Without loss of generality, assume that $x=x_1+x_2$ with $x_1=\sum\limits_{\tilde{\wp}\notin\frakp(v)} \lambda_{\tilde{\wp}}\tilde{\wp}$ and $x_2=\sum\limits_{\wp\in\frakp(v)} \mu_{\wp}\wp$ ($\lambda_{\tilde{\wp}}, \mu_{\wp} \in \kk$). Then
\begin{align} \label{formula:kerelem}
 (x_1+\I) + \Sigma \in \J' = \langle s_j + \I \mid 1\le j\le n\rangle
\end{align}
where $x_1+\I \notin \frakp(v)+\I$ and $\Sigma=\sum\limits_{\wp\in\frakp(v)} ((k_{\wp}+\mu_{\wp})\wp+ \I) \in \frakp(v)+\I$.
Since $\frakp(v)$ dose not {contain} any $s_j$ ($1\le j\le n$),
the sum $\Sigma$ is zero in $A/\J=(\kk\Q/\I)/\langle r+\J\mid r\in \I\rangle$.
Then $x_1+\I\in \J'$. It follows that $\bar{x} = (x_1+\I)+\Sigma + \J'$ is zero in $(A/\J)(\frakp(v))$. }
\end{proof}

Next, we provide a lemma and a proposition which are used to show our main result.

\begin{lemma} \label{lemm:NPDSisquotient}

Any \NPDS\ algebra $A=\kk\Q/\I$ with admissible ideal $\I$ is isomorphic to a quotient of a \gluing algebra of some \PDS\  algebra,
where any generator of $\I$ is either a zero relation or a commutative relation given by a diamond quiver.
\end{lemma}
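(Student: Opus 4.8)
The plan is to unwind the definition of an \NPDS\ quiver and then rebuild $A$ from a \PDS\ algebra using the \gluing machinery of Definition \ref{Gluing algebras}. By item (DS2), the underlying graph of $\Q$ is obtained from the underlying graph of a \PDS\ quiver $\Q'$ by gluing two disjoint families of vertices $\{v_i\}_{1\le i\le l}$ and $\{w_i\}_{1\le i\le l}$; equivalently, $\Q$ is the quiver given by the bound quiver of $\kk\Q'/\langle\e_{v_i}-\e_{w_i}\mid 1\le i\le l\rangle$. First I would fix this presentation and split the generators of $\I$ into the family $R_{\mathrm{a}}$ of relations avoiding every glued vertex $v_i=w_i$, and the family $R_{\mathrm{b}}$ of relations crossing some glued vertex. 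Through the \gluing bijection $\sigma$ of Definition \ref{Gluing algebras}, each relation of $R_{\mathrm{a}}$ lifts uniquely to a relation on $\Q'$, since it visits none of the glued vertices.

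Next I would define the candidate \PDS\ algebra $B=\kk\Q'/\I'$, where $\I'$ is generated by the lifts of $R_{\mathrm{a}}$. Its quiver $\Q'$ is \PDS\ by construction; because a \PDS\ quiver is finite and acyclic (each diamond is oriented with a unique source and sink, so the horizontal and vertical connections produce no oriented cycle), one has $\rad^N\kk\Q'=0$ for large $N$, whence any ideal generated by paths of length $\ge 2$ is admissible. Thus $\I'$ is admissible and $B$ is a genuine \PDS\ algebra whose generators are, by hypothesis, zero relations or commutative relations of diamond subquivers. I would then form the \gluing algebra $B^{\g}_X(\frakS)$ with \gluing index $X=\{\e_{v_i}-\e_{w_i}\mid 1\le i\le l\}$ and supplement $\frakS=\bigcup_{1\le i\le l}\frakp(v_i)$, the minimal admissible choice of Definition \ref{Gluing algebras}(2).

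The heart of the argument is to identify $B^{\g}_X(\frakS)$ explicitly. Using the isomorphism $\itLamb(\frakp(v_i)\mid i\in I)\cong\kk\Q^{\g}/\J$ established before the lemma, with $\J=\big\langle\s_{\I'}\cup(\z_{\I'}\backslash\bigcup_{i}\frakp(v_i))\big\rangle$, together with Lemma \ref{lemm:supplement} to interchange the supplement with the quotient, I would show $B^{\g}_X(\frakS)\cong\kk\Q/\J$ with $\J=\langle R_{\mathrm{a}}\rangle$. The point is that the plain gluing $B/\langle X\rangle$ annihilates exactly the newly created crossing paths, while the supplement $\frakS$ restores precisely those paths; hence no generator of $\J$ is a crossing relation, so $\J$ is generated by $R_{\mathrm{a}}$ alone. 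Finally, since $\I=\langle R_{\mathrm{a}}\cup R_{\mathrm{b}}\rangle$, Lemma \ref{lemm:ideal} gives
\[
B^{\g}_X(\frakS)\big/\langle R_{\mathrm{b}}\rangle\ \cong\ \kk\Q/\langle\J,R_{\mathrm{b}}\rangle\ =\ \kk\Q/\I\ =\ A,
\]
exhibiting $A$ as a quotient of the \gluing algebra $B^{\g}_X(\frakS)$ of the \PDS\ algebra $B$; by hypothesis every element of $R_{\mathrm{b}}$ is a zero relation or a commutative relation of a diamond, as required.

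I expect the main obstacle to be the identification $B^{\g}_X(\frakS)\cong\kk\Q/\J$. One must check carefully that the supplement $\frakS$ restores \emph{all and only} the crossing paths destroyed by the gluing, so that the crossing relations $R_{\mathrm{b}}$ survive as genuine relations in $B^{\g}_X(\frakS)$ rather than being prematurely annihilated, and that the hypothesis of Lemma \ref{lemm:supplement}, namely that $\frakp(v_i)$ contains no generator of $\J$, holds by our split of $\I$ into $R_{\mathrm{a}}$ and $R_{\mathrm{b}}$. A subsidiary point to verify is the unambiguous lift of each $R_{\mathrm{a}}$-relation along $\sigma$, which is where the disjointness $\{v_i\}\cap\{w_i\}=\varnothing$ and the non-crossing condition are used.
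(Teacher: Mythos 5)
Your proposal is correct and takes essentially the same route as the paper's proof: both unglue $\Q$ to a \PDS\ quiver $\Q'$ via (DS2), lift a subfamily of the generators of $\I$ to define a \PDS\ algebra on $\Q'$, form its \gluing algebra with \gluing index $\{\e_{v_i}-\e_{w_i}\mid 1\le i\le l\}$ and supplement $\bigcup_{i=1}^{l}\frakp(v_i)$, and recover $A$ as the quotient by the remaining generators via Lemma \ref{lemm:ideal}, exactly as in the paper's isomorphism (\ref{formula:NPDSisquotient}). The only divergences are bookkeeping: the paper puts all zero relations that lift to paths on $\Q'$ (including crossing ones) into the \PDS\ algebra's ideal and pushes every commutative relation together with the crossing zero relations into the final quotient, whereas you split generators by crossing versus non-crossing — an equivalent choice — and your justification of admissibility of $\I'$ via acyclicity of \PDS\ quivers overstates the definition (only the underlying graph, not the orientation, is constrained), though the paper itself leaves that same admissibility point unaddressed.
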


\begin{proof}
Without loss of generality, we give two assumptions as follows:
\begin{itemize}
  \item $\Q$ is obtained by \gluing vertices $v_i, w_i$ ($1\le i \le l$) of some quiver \PDS\ $\Q'$
    (by the definition of \NPDS\ quiver);
  \item \checkLiu{All zero relations in $\I$ are paths $r_1$, $\ldots$, $r_n$, and the other relations in $\I$ are $c_1$, $\ldots$, $c_m$.}
\end{itemize}
Then every $r_k$ can be seen as a path $r'_k$ on $\Q'$ \checkLiu{which is either a relation of length $\ge 2$ or a trivial path $0$}.
\checkLiu{To be more precise, there are two situations: }
\begin{itemize}
  \item[(i)] \checkLiu{$r_k$ is a path of length $\ge 2$ on $\Q'$;}
  \item[(ii)] \checkLiu{$r_k$ is a new path on $\Q$ which is obtained by gluing vertices of $\Q$,
    then $r_k$ can not be seen as a non-zero path in $\Q'$, and we define $r_k'=0$ in this situation.}
\end{itemize}
Let $\I' = \langle r'_k \ne 0 \mid 1\le k\le n \rangle$ and $A'=\kk\Q'/\I'$.
It is easy to see that $(\Q, \langle r_k\mid 1\le k\le n \rangle)$ is the bound quiver of $ A'^{\g}_{\{\e_{v_i}-\e_{w_i} \mid 1\le i\le l\}} (=A'^{\g})$, that is,
\begin{align}
A'^{\g} = A'/\langle \e_{v_i}-\e_{w_i} \mid 1\le i\le l \rangle \cong \kk\Q/\langle r_k \mid 1\le k\le n \rangle. \nonumber
\end{align}
Consider the ideal $\J$ of $\kk\Q$ whose generators are given by
\begin{itemize}
  \item paths $\wp_1$, $\cdots$, $\wp_m$ of length $\ge 2$ belong to $\I$
    such that any $\wp_t$ ($1\le t\le m$) crosses $v_i$ ($=w_i$);
  \item \checkLiu{and the other relations $c_1, \ldots, c_m$ in $\I$}.
\end{itemize}
Obviously, the generators of $\I$ are divided to two parts, one is corresponded by the elements in $\{r_k'\ne 0\mid 1\le k\le n\}$,
the other is corresponded by the generators of $\J$. Then
\begin{align} \label{formula:NPDSisquotient}
A = \kk\Q/\I
& \cong A'^{\g}\Big(\bigcup_{i=1}^{l}\frakp(v_i)\Big) \Big/ \J
  = (\kk\Q'/\I')^{\g}\Big(\bigcup_{i=1}^{l}\frakp(v_i)\Big) \Big/ \J \\
& \left( \cong (\kk\Q')^{\g}_{\{\e_{v_i}-\e_{w_i}\mid 1\le i\le l\}}
  \Big(\bigcup_{i=1}^{l}\frakp(v_i)\Big) \Big/ \langle \{r_k'\mid 1\le k\le n\} \cup \mathcal{J} \rangle \right), \nonumber
\end{align}
where $A'^{\g}\Big(\bigcup\limits_{i=1}^{l}\frakp(v_i)\Big)$ is a gluing algebra of the \PDS\ algebra $A'$.
\end{proof}

\begin{proposition} \label{prop:GDSisquotient}
Let $A=\kk\Q/\I$ be a \GDS\ algebra with admissible ideal $\I$ whose generator is either a zero relation or a commutative relation.
Then $A$ is isomorphic to a quotient of a \gluing algebra of some \PDS\  algebra.
\end{proposition}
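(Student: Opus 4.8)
The plan is to deduce this from Lemma~\ref{lemm:NPDSisquotient} by treating the two possible types of the underlying \DS\ algebra separately, after first observing that passing to $\Lambda/\Lambda\e\Lambda$ is itself an instance of the \gluing construction. By Definition~\ref{def:GDS}(3), being \GDS\ furnishes a \DS\ algebra $\Lambda$ and an idempotent $\e=\e_{v_1}+\cdots+\e_{v_m}$ of $\Lambda$ for which $\Q$ is the quiver of the bound quiver of $\Lambda/\Lambda\e\Lambda$ and every diamond subquiver of $\Q$ is commutative in $A$. By Remark~\ref{rmk:gluing}(2)--(3), deleting the vertices $v_1,\dots,v_m$ realizes $\Lambda/\Lambda\e\Lambda$ as the \gluing algebra $\Lambda^{\g}_X(\varnothing)$ with $X=\{v_1,\dots,v_m\}$, and the canonical map $\Lambda\twoheadrightarrow\Lambda/\Lambda\e\Lambda$ is a quotient.

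The first substantive step is to exhibit $A$ as a quotient of $\Lambda/\Lambda\e\Lambda$. Writing $\Lambda/\Lambda\e\Lambda=\kk\Q/\I_0$, its generating relations $\I_0$ are the images of the relations of $\Lambda$ on the surviving quiver $\Q$; using the freedom in the choice of $\Lambda$ I would arrange these to be commutative relations of diamonds of $\Q$ together with zero relations. Since, by hypothesis, $\I$ is generated by zero and commutative relations and every diamond of $\Q$ is already commutative in $A$, this yields $\I_0\subseteq\I$. Lemma~\ref{lemm:ideal}, applied with $\J$ the ideal generated by the remaining generators of $\I$, then identifies $A\cong(\Lambda/\Lambda\e\Lambda)/\J$, so that $A$ is a quotient of $\Lambda/\Lambda\e\Lambda$.

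It remains to realize $\Lambda/\Lambda\e\Lambda$ as (a quotient of) a \gluing algebra of a \PDS\ algebra, and here I would split into cases according to the type of $\Lambda$. If $\Lambda$ is a \PDS\ algebra, then $\Lambda/\Lambda\e\Lambda=\Lambda^{\g}_X(\varnothing)$ is itself a \gluing algebra of the \PDS\ algebra $\Lambda$, so the previous step already presents $A$ as a quotient of a \gluing algebra of a \PDS\ algebra. If $\Lambda$ is an \NPDS\ algebra, then Lemma~\ref{lemm:NPDSisquotient} produces a \PDS\ algebra $P$, a \gluing algebra $G$ of $P$, and a surjection $G\twoheadrightarrow\Lambda$; composing
\[ G \twoheadrightarrow \Lambda \twoheadrightarrow \Lambda/\Lambda\e\Lambda \twoheadrightarrow A \]
exhibits $A$ as a quotient of the \gluing algebra $G$ of the \PDS\ algebra $P$, which is the desired conclusion.

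The hard part will be the middle step, namely verifying that $A$ is genuinely a quotient of $\Lambda/\Lambda\e\Lambda$, i.e.\ that the surviving relations $\I_0$ lie inside $\I$. The delicate point is that the \GDS\ definition only matches the quiver of $A$ with that of $\Lambda/\Lambda\e\Lambda$ and leaves the two ideals a priori unrelated, so one must use both the commutativity of every diamond of $\Q$ in $A$ and the hypothesis that $\I$ has only zero and commutative generators, together with the freedom to choose the \DS\ algebra $\Lambda$ realizing $\Q$, in order to align $\I_0$ with $\I$; once this is done, Lemma~\ref{lemm:ideal} packages the comparison of the successive quotients cleanly and the rest is formal.
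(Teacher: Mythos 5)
Your high-level skeleton (split on whether $\Lambda$ is a \PDS\ or \NPDS\ algebra, view $\Lambda/\Lambda\e\Lambda$ as the \gluing algebra $\Lambda^{\g}_X(\varnothing)$ via Remark \ref{rmk:gluing}(2)--(3), and invoke Lemma \ref{lemm:NPDSisquotient} in the \NPDS\ case) is coherent, but the middle step --- the one you yourself flag as ``the hard part'' --- has a genuine gap, and the one justification you offer for it is invalid. You argue that once the generators of $\I_0$ are arranged to be commutative relations of diamonds of $\Q$ together with zero relations, the hypotheses on $A$ give $\I_0\subseteq\I$. Commutativity of every diamond of $\Q$ in $A$ does handle the commutative generators, but nothing forces a path that is zero in $\Lambda/\Lambda\e\Lambda$ to be zero in $A$: the \GDS\ definition (Definition \ref{def:GDS}(3)) matches only the quivers, and zero relations are exactly where the two ideals can disagree. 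Worse, unwanted zero relations are created by the deletion itself: if $\I_\Lambda$ contains a commutativity relation $p-q$ of a diamond of $\Q_\Lambda$ and $q$ passes through a deleted vertex, the descended ideal contains the zero relation $p$, which need not hold in $A$. The paper's own example in Subsection \ref{subsec:exp1} exhibits this: the descended ideal $\I'$ of $\dA_4^e/\langle\e,\e_{(1,2)}-\e_{(3,4)}\rangle$ contains $a_4b_1$, which is \emph{not} zero in $A^{\Aus}$; this is precisely why the paper's formalism carries the supplement $\frakS$, whose entire purpose is to restore such paths. So ``$A$ is a quotient of $\Lambda/\Lambda\e\Lambda$'' is simply false for a general $\Lambda$ witnessing \GDS-ness, and rescuing it by ``freedom of choice'' requires actual work you have not done: construct a new admissible ideal on $\kk\Q_\Lambda$ (say, the lift of $\I$ together with all paths through the deleted vertices), prove admissibility --- nontrivial when $\Q_\Lambda$ is an \NPDS\ quiver with oriented cycles, where the zero ideal is not admissible --- and verify that the descended ideal is exactly $\I$, i.e.\ that no half-killed commutative relations survive.

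For comparison, the paper sidesteps this entirely: it never relates $\I$ to $\I_\Lambda$ and uses $\Lambda$ only for its quiver. It splits $\I=\langle c_i, z_j\rangle$ into commutative and zero relations, sets $B=\kk\Q/\langle c_i\rangle$ so that $A\cong B/\langle z_j\rangle$ by Lemma \ref{lemm:ideal}, and then shows $B$ is \emph{exactly} (not merely a quotient of) a \gluing algebra $\big(\kk\Q_D/\langle c_i\rangle\big)^{\g}_X(\frakS)$ of the \PDS\ algebra $\kk\Q_D/\langle c_i\rangle$, whose ideal is built from the lifted $c_i$ rather than from $\I_\Lambda$; the supplement $\frakS$, manipulated through Lemmas \ref{lemm:ideal} and \ref{lemm:supplement}, absorbs all zero relations created by deleting (Case 1) or deleting and \gluing (Case 2) vertices. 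If you want to keep your route, the cleanest repair is essentially to reproduce that choice of ideal before invoking Lemma \ref{lemm:NPDSisquotient}; as written, your proposal asserts its crucial step rather than proving it.
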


\begin{proof}
By the definition of \GDS\ algebra (see Definition \ref{def:GDS}),
there exists a DS algebra $\alg = \kk\Q_{\alg}/\I_{\alg}$ and some idempotent $\e = \sum\limits_{i\in \frakI}\e_i$
($\e_{i}$ is the idempotent corresponding to some vertex $i \in \Q_0$, and $\frakI \subseteq \Q_0$ is an index set) such that
$\Q = \Q_{\alg/\alg\e\alg}$. Then $A = \kk\Q/\I = \kk\Q_{\alg/\alg\e\alg}/\I$.
Assume that $\I$ is generated by $l$ commutative relations $c_1$, $\cdots$, $c_l$ and $m$ zero relations $z_1$, $\cdots$, $z_m$.
By Lemma \ref{lemm:ideal}, we obtain
\begin{align}
A & = \kk\Q_{\alg/\alg\e\alg} \big/ \langle c_i, z_j \mid 1\le i\le l, 1\le j\le m \rangle \nonumber \\
  & \cong (\kk\Q_{\alg/\alg\e\alg}/\langle c_i\mid 1\le i\le l\rangle)
    \big/ \langle z_j\mid 1\le j\le m\rangle, \nonumber
\end{align}
where $\kk\Q_{\alg/\alg\e\alg}/\langle c_i\mid 1\le i\le l\rangle$ is a \GDS\ algebra, denoted by $B$,
whose ideal is admissible and generated by commutative relations.
Thus, $(\Q_{\alg/\alg\e\alg}, \langle c_i\mid 1\le i\le l\rangle)$ is the bound quiver of $B$ and $A\cong B/\langle z_j\mid 1\le j\le m\rangle$.

If $B$ is a \PDS\ algebra, we are done.

Otherwise, it is easy to see that there exists a DS algebra $D=\kk\Q_D/\I_D$
and an idempotent $\e^D = \sum\limits_{k=1}^n \e^D_k$ (all $\e^D_k$ are primitive idempotents) in $D$
such that $\Q_{\alg/\alg\e\alg}$ equals to the quiver given by the bound quiver of $\kk\Q_D/\langle \e^D \rangle$.
To be more precise, the bound quiver of $\kk\Q_D/\langle \e^D \rangle$ can be given by deleting all vertices corresponded by $\e^D_k$.
During the above process, $\Q_D$ changes to $\Q_{\alg/\alg\e\alg}$, and some paths $p_1$, $\cdots$, $p_n$ on $\Q_D$ are zero in $\kk\Q_D/\langle \e^D \rangle$.
Let $V_k$ be the set of all vertices on $p_k$ ($1\le k\le n$) and $\frakS = \bigcup\limits_{k=1}^n\bigcup\limits_{v\in V_k}\frakp(v)$. Now, we have two cases as follows.
\begin{itemize}
\item[Case] 1. $\Q_D$ is a \PDS\ quiver. In this case,
\[ \kk\Q_{\alg/\alg\e\alg} \cong \big(\kk\Q_D/\langle \e^D \rangle\big)(\frakS), \]
Thus,
\begin{align}
B & = \kk\Q_{\alg/\alg\e\alg}/\langle c_i\mid 1\le i\le l \rangle && \nonumber \\
& \cong \big(\kk\Q_D/\langle \e^D \rangle\big)(\frakS) \Big/ \langle c_i\mid 1\le i\le l \rangle && \nonumber \\
& \cong \big((\kk\Q_D/\langle \e^D \rangle) \big/ \langle c_i\mid 1\le i\le l \rangle\big)(\frakS)
  && \text{ (by Lemma \ref{lemm:supplement}) } \nonumber \\
& \cong \big(\kk\Q_D/\langle \e_k^D, c_i\mid 1\le k\le n, 1\le i\le l \rangle\big)(\frakS)
  && \text{ (by Lemma \ref{lemm:ideal}) } \nonumber \\
& \cong \big((\kk\Q_D/\langle c_i\mid 1\le i\le l \rangle) / \langle\e^D\rangle \big)(\frakS)
  && \text{ (by Lemma \ref{lemm:ideal}) } \nonumber \\
& = \big(\kk\Q_D/\langle c_i\mid 1\le i\le l \rangle\big)^{\g}_{\{\e^D_k \mid 1\le k\le n\}}(\frakS). && \nonumber
\end{align}
The above formula shows that $B$ is a \gluing algebra of a \PDS\ algebra $\kk\Q_D/\langle c_i\mid 1\le i\le l\rangle$.

\item[Case] 2. $\Q_D$ is a \NPDS\ quiver. Then there is a \PDS\ algebra $\widehat{D} = \kk\Q_{\widehat{D}}/\langle H \rangle$
($H=\{\e^{\widehat{D}}_{v_h}-\e^{\widehat{D}}_{w_h} \mid 1\le h\le \hbar\}$)
such that $\Q_D$ equals to the quiver given by the bound quiver of $\widehat{D}$ (see the definition of \NPDS\ quiver).
Thus, $\Q_{\alg\e\alg}$ can be given by $\Q_{\widehat{D}}$ through following steps.
\begin{itemize}
  \item[]Step 1. \Gluing the vertices $v_h$ and $w_h$ ($1\le h\le \hbar$);
  \item[]Step 2. and deleting all vertices $\e^D_k$ ($1\le k\le n$) which naturally are seen as the vertices $\e^{\widehat{D}}_k$ of $\Q_{\widehat{D}}$.
\end{itemize}
Let $\frakS' = \frakS\bigcup\bigcup\limits_{1\le h\le \hbar}\frakp(v_h)$. Then
\[ \kk\Q_{\alg\e\alg} \cong \big(\kk\Q_{\widehat{D}}/\langle\e^{\widehat{D}}, H\rangle\big)(\frakS'), \]
and so
\begin{align}
B & = \kk\Q_{\alg/\alg\e\alg}/\langle c_i\mid 1\le i\le l \rangle && \nonumber \\
& \cong \big(\kk\Q_{\widehat{D}}/\langle \e^{\widehat{D}}, H \rangle\big)(\frakS')
  \Big/ \langle c_i\mid 1\le i\le l \rangle && \nonumber \\
& \cong \big((\kk\Q_{\widehat{D}}/\langle \e^{\widehat{D}}, H \rangle)
  \big/ \langle c_i\mid 1\le i\le l \rangle\big)(\frakS')
  && \text{ (by Lemma \ref{lemm:supplement}) } \nonumber \\
& \cong \big(\kk\Q_{\widehat{D}}/\langle \e_k^{\widehat{D}}, H, c_i\mid 1\le k\le n, 1\le i\le l \rangle\big)(\frakS')
  && \text{ (by Lemma \ref{lemm:ideal}) } \nonumber \\
& \cong \big((\kk\Q_{\widehat{D}}/\langle c_i\mid 1\le i\le l \rangle) / \langle\e^{\widehat{D}}, H\rangle \big)(\frakS')
  && \text{ (by Lemma \ref{lemm:ideal}) } \nonumber \\
& = \big(\kk\Q_{\widehat{D}}/\langle c_i\mid 1\le i\le l \rangle\big)^{\g}_{\{\e^{\widehat{D}}_k \mid 1\le k\le n\}\cup H}(\frakS). && \nonumber
\end{align}
The above formula shows that $B$ is a \gluing algebra of a \PDS\ algebra $\kk\Q_{\widehat{D}}/\langle c_i\mid 1\le i\le l\rangle$.
\end{itemize}
Therefore, $A \cong B/\langle z_j\mid 1\le j\le m\rangle$ is a quotient of a gluing algebra $B$ of some \PDS\ algebra.
\end{proof}

\begin{remark} \rm
Note that the proof of the case for $B$ being a \NPDS\ algebra is similar to that of Case 2.
\end{remark}
\begin{definition}[Tensor algebras]\rm
Let $A$ and $B$ be two finite-dimensional algebras whose bound quivers are $(\Q_A, \I_A)$ and $(\Q_B, \I_B)$, respectively. The {\defines $\kk$-tensor algebra} of $A$ and $B$ is the tensor $A\otimes_{\kk} B$ of $A$ and $B$ as $\kk$-vector spaces.\end{definition}
By \cite[Proposition 3]{Her2008}, the quiver $\Q_{A\otimes_{\kk}B}$ of $A\otimes_{\kk} B = \kk\Q_{A\otimes_{\kk}B} / \I_{A\otimes_{\kk}B}$ is the quadruple $((\Q_{A\otimes_{\kk}B})_0, (\Q_{A\otimes_{\kk}B})_1, \source, \target)$ given by
\begin{itemize}
    \item $(\Q_{A\otimes_{\kk}B})_0 = (\Q_A)_0\times(\Q_B)_0 = \{(i,j): i \in (\Q_A)_0, j \in (\Q_B)_0\}$,
    \item $(\Q_{A\otimes_{\kk}B})_1=\{e_x\otimes y : x \in (\Q_A)_0, y\in (\Q_B)_1 \}$ $\cup $  $\{x\otimes e_y :x\in (\Q_A)_1, y\in (\Q_B)_0\}$,
    \item $\source: (\Q_{A\otimes_{\kk}B})_1 \to (\Q_{A\otimes_{\kk}B})_0$ is the function respectively sending any arrows $e_x\otimes y$ and $x\otimes e_y$ to the elements $(x,\source(y))$ and $(\source(x), y)$,
    \item and $\target: (\Q_{A\otimes_{\kk}B})_1 \to (\Q_{A\otimes_{\kk}B})_0$  is the function respectively sending any arrows $e_x\otimes y$ and $x\otimes e_y$ to the elements $(x, \target(y))$ and $(\target(x), y)$.
\end{itemize}
The ideal $\I_{A\otimes_{\kk}B}$ is induced by $\I_A$, $\I_B$ and the properties of tensor product,
to be more precise, $\I_{A\otimes_{\kk}B}$ is generated by the following relations.
\begin{itemize}
  \item $e_x \otimes \wp_B$ ($x\in (\Q_A)_0$), where $\wp_B$ is a generator of $\I_B$,
  \item $\wp_A \otimes e_y$ ($y\in (\Q_B)_0$), where $\wp_A$ is a generator of $\I_A$,
  \item $(e_i\otimes\beta)(\alpha\otimes e_{j'}) - (\alpha\otimes e_{i'})(e_j\otimes \beta)$,
    where $\alpha: i\to j$ is an arrow in $(\Q_A)_1$ and $\beta: i'\to j'$ is an arrow in $(\Q_B)_1$,
    that is, any subquiver
    \[\xymatrix{
    (i,j') \ar[r]^{\alpha\otimes e_{j'}} & (j,j')  \\
    (i,i') \ar[u]^{e_i\otimes\beta} \ar[r]_{\alpha\otimes e_{i'}} & (j,i') \ar[u]_{e_j\otimes \beta} }\]
    of $\Q_{A\otimes_{\kk}B}$ is commutative.
\end{itemize}
In particular, the {\defines enveloping algebra} $A^e$ of finite-dimensional algebra $A$ is the $\kk$-tensor $A\otimes_{\kk}A^{\op}$, where $A^{\op}$ is the opposite algebra of $A$.

\begin{example}\rm
Denote by $A_n = \kk\Q/\I$ the algebra of type $\A$ with $n$ vertices, i.e.,
the underlying graph of its quiver $\Q$ is the type of $\A_n = $
\[ \xymatrix{1 \ar@{-}[r] & 2 \ar@{-}[r] & \cdots \ar@{-}[r] & n.} \]
Then the underlying graph of the tensor $A_m \otimes_{\kk} A_n$ ($=A_{m\otimes n}$ for short) of two algebras $A_m$ and $A_n$ is a \PDS\ algebra whose underlying graph is of the form shown in \Pic \ref{fig:Am-tensor-An} (1).
Thus, $A_{m\otimes n}$ is a \PDS\  algebra.
\begin{figure}[H]
\centering
\begin{tikzpicture}
\draw (0,0) node{\tiny$\xymatrix@=0.6cm@R=0.6cm{
 (1,n)  \ar@{-}[r]\ar@{-}[d] & (2,n)  \ar@{-}[r]\ar@{-}[d] & \cdots \ar@{-}[r]\ar@{-}[d] & (m,n)  \ar@{-}[d] \\
 \vdots \ar@{.}[r]\ar@{-}[d] & \vdots \ar@{.}[r]\ar@{-}[d] & \vdots \ar@{.}[r]\ar@{-}[d] & \vdots \ar@{-}[d] \\
 (1,2)  \ar@{-}[r]\ar@{-}[d] & (2,2)  \ar@{-}[r]\ar@{-}[d] & \cdots \ar@{-}[r]\ar@{-}[d] & (m,2)  \ar@{-}[d] \\
 (1,1)  \ar@{-}[r]           & (2,1)  \ar@{-}[r]           & \cdots \ar@{-}[r]           & (m,1)
}$};
\draw (0,-2.5) node{$(1)$};
\end{tikzpicture}
\ \
\begin{tikzpicture}
\draw (0,0) node{\tiny$\xymatrix@=0.6cm@R=0.6cm{
 (1,n)  \ar@{->}[r]\ar@{<-}[d] & (2,n)  \ar@{->}[r]\ar@{<-}[d] & \cdots \ar@{->}[r]\ar@{<-}[d] & (m,n)  \ar@{<-}[d] \\
 \vdots \ar@{.}[r] \ar@{<-}[d] & \vdots \ar@{.}[r] \ar@{<-}[d] & \vdots \ar@{.}[r] \ar@{<-}[d] & \vdots \ar@{<-}[d] \\
 (1,2)  \ar@{->}[r]\ar@{<-}[d] & (2,2)  \ar@{->}[r]\ar@{<-}[d] & \cdots \ar@{->}[r]\ar@{<-}[d] & (m,2)  \ar@{<-}[d] \\
 (1,1)  \ar@{->}[r]            & (2,1)  \ar@{->}[r]            & \cdots \ar@{->}[r]            & (m,1)
}$};
\draw (0,-2.5) node{$(2)$};
\end{tikzpicture}
\caption{The underlying graph of $A_m \otimes_{\kk} A_n$ and the quiver of $\protect\dA_m\otimes_{\kk}\protect\dA_n$}
\label{fig:Am-tensor-An}
\end{figure}
\end{example}

\begin{definition}\rm
We call that a \PDS\  algebra (resp. \PDS\  quiver) is {\defines standard} if its underlying graph is of the form shown in \Pic \ref{fig:Am-tensor-An}(1).
\end{definition}

\begin{remark} \rm
Tensor algebra $\dA_m\otimes_{\kk}\dA_n$ is a standard \PDS\ algebra, whose quiver is shown in \Pic \ref{fig:Am-tensor-An} (2).
\end{remark}

The following lemma gives a {more} ``standard'' description of \NPDS\  algebras than Lemma \ref{lemm:NPDSisquotient}.

\begin{lemma} \label{lemm:NPDSisquotient2}
\begin{itemize}
  \item[ ]
  \item[\rm(1)]
    Let $A$ be a \PDS\ algebra and $A_X^{\g}(\frakS_1)$ a \gluing algebra of $A$
    {\rm(}$\frakS_1$ is a supplement of $A_X^{\g}${\rm)}.
    For arbitrary quotient $A_X^{\g}(\frakS_1)/\I$ of $A_X^{\g}(\frakS_1)$,
    there is a standard \PDS\ algebra $B$ such that
    \[ A_X^{\g}(\frakS_1)/\I \cong B_Y^{\g}(\frakS_2)/\J, \]
    where $B_Y^{\g}(\frakS_2)$ is a \gluing algebra of $B$ and $\frakS_2$ is a supplement of $B_Y^{\g}$.
  \item[\rm(2)] Furthermore, any \NPDS\  algebra $A=\kk\Q/\I$ with admissible ideal $\I$ is isomorphic to a quotient of a \gluing algebra of some standard \PDS\  algebra.
\end{itemize}
\end{lemma}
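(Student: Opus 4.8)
The plan is to prove (1) first and then deduce (2) at once from (1) and Lemma \ref{lemm:NPDSisquotient}. The organising remark is that being \emph{standard} constrains only the underlying graph of a \PDS\ algebra, not its relations (Figure \ref{fig:Am-tensor-An}): a \PDS\ quiver is a ``grid with holes'', assembled from diamonds $\Square{i}{j}$ by horizontal and vertical connections with some cells trivial, and any such pattern embeds into a large enough full rectangular grid. Hence I am free to take for $B$ the path algebra $B=\kk\Q_B$ of a full oriented grid $\dA_m\otimes_{\kk}\dA_n$ (with $m,n$ chosen so that $\overline{\Q_A}$ sits among the cells of $\overline{\Q_B}$ and $\I_B=0$); since $\Q_B$ is acyclic this $\I_B$ is admissible, and $B$ is a standard \PDS\ algebra.

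First I would realise the given \PDS\ algebra $A=\kk\Q_A/\I_A$ over $B$. The vertices of $\Q_B$ lying in the holes of $\Q_A$ assemble into an idempotent $\e^B=\sum_k\e^B_k$, and because $B$ carries no relations, deleting them returns $\kk\Q_A$ exactly: by Remark \ref{rmk:gluing}(2) one has $B^{\g}_{\{\e^B_k\}}=B/\langle\e^B\rangle\cong\kk\Q_A$, with no supplement required at this stage. Imposing the relations $\I_A$ then gives $A\cong B^{\g}_{\{\e^B_k\}}/\I_A$, where the quotient is understood in the sense of the Notation preceding Lemma \ref{lemm:supplement}.

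Next I would feed in the gluing data $X$, the supplement $\frakS_1$ and the ideal $\I$ of $A_X^{\g}(\frakS_1)/\I$, the point being that a \gluing of a \gluing is again a \gluing and a supplement of a supplement is again a supplement. Writing $X=\{\e_{v_i}-\e_{w_i}\mid 1\le i\le l\}\cup\{v_j\mid j\in J\}$ and tracing the vertices $v_i,w_i,v_j$ back to $\Q_B$, the total gluing index over $B$ becomes $Y=\{\e^B_k\}\cup X$ (holes and deletions together with the idempotent differences), while the supplement becomes $\frakS_2=\frakS_1\cup\bigcup_{1\le i\le l}\frakp(v_i)$, enlarged so as to satisfy the containment $\frakS_2\supseteq\bigcup_{1\le i\le l}\frakp(v_i)$ demanded by Definition \ref{Gluing algebras}(2). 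Using Lemma \ref{lemm:supplement} to commute each supplement past its quotient and Lemma \ref{lemm:ideal} to amalgamate the resulting zero relations, commutativity relations and idempotent relations into a single admissible ideal $\J$, I obtain $A_X^{\g}(\frakS_1)/\I\cong B_Y^{\g}(\frakS_2)/\J$ with $B$ standard, which is (1).

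The hard part will be the bookkeeping: one must verify that the two successive steps---deletion of holes and gluing along $X$---genuinely fuse into a single \gluing algebra $B_Y^{\g}(\frakS_2)$ in the sense of Definition \ref{Gluing algebras}, i.e. that $Y$ is a legitimate gluing index and $\frakS_2$ a legitimate supplement, and, crucially, that every broken path $\wp_1\otimes\wp_2$ crossing a newly glued vertex that is needed to describe $A_X^{\g}(\frakS_1)/\I$ is retained in $\frakS_2$ and none is lost when the operations are carried out in one stage; this is controlled through the canonical bijection $\sigma$ underlying the supplement construction. Finally, (2) is immediate: by Lemma \ref{lemm:NPDSisquotient} any \NPDS\ algebra is isomorphic to a quotient of a \gluing algebra of a \PDS\ algebra, hence to an object of the form $A_X^{\g}(\frakS_1)/\I$ treated in (1); applying (1) replaces that \PDS\ algebra by a standard one and yields the desired presentation.
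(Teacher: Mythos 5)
Your overall architecture is the same as the paper's proof: embed the \PDS\ quiver $\Q_A$ into a full grid as in \Pic \ref{fig:Am-tensor-An}, treat the complementary vertices as a deletion index, merge it with $X$ into a single \gluing index $Y$, push $\frakS_1$ forward, commute supplements past quotients via Lemma \ref{lemm:supplement}, amalgamate the ideals via Lemma \ref{lemm:ideal}, and deduce (2) from (1) together with Lemma \ref{lemm:NPDSisquotient} --- this last step is verbatim the paper's. The only structural deviations are that you keep $B$ relation-free and load the relations of $A$ into the final ideal (the paper instead builds $\I_A$ into $B$, which is equally legitimate), and that you take $\frakS_2$ to be essentially the pushforward of $\frakS_1$, whereas the paper enlarges the supplement by an additional set $\frakp_W$ attached to a carefully chosen vertex set $W$.

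That second deviation is where your argument has a genuine gap. The anchor step ``because $B$ carries no relations, deleting the hole vertices returns $\kk\Q_A$ exactly, so $B^{\g}_{\{\e^B_k\}}=B/\langle\e^B\rangle\cong\kk\Q_A$'' is false in general: $B/\langle\e^B\rangle$ is the path algebra of the \emph{full} subquiver of the grid on the surviving vertices, and a \PDS\ quiver need not sit inside any grid as a full subquiver. Take the height-$2$ quiver of Example \ref{exp:connection1}: in row $j=0$ there is no edge between $\ver{1}{0}$ and $\ver{2}{0}$, while row $j=1$ is unbroken; the diamond constraints force $\ver{1}{1}$ and $\ver{2}{1}$ onto adjacent grid positions and $\ver{1}{0}$, $\ver{2}{0}$ onto neighbouring ones, and one checks that \emph{every} placement of this configuration in a grid creates a grid edge joining two image vertices that is not an edge of $\Q_A$. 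Such a surplus arrow has both endpoints outside the deleted set, hence survives in $B/\langle\e^B\rangle$, and nothing in your construction ever kills it: $\I_A$, $X$, $\I$, and therefore your $\J$, are all supported on the image of $\Q_A$. Consequently $B^{\g}_{\{\e^B_k\}}\not\cong\kk\Q_A$, and the subsequent chain of isomorphisms collapses. The repair is to throw every such surplus arrow into the final quotient ideal $\J$ (the lemma places no restriction on $\J$), after which your bookkeeping goes through; but then $\J$ contains paths of length one, so your parenthetical claim that $\J$ can be taken admissible must be dropped. Two further cautions. First, the path algebra of the grid quiver is \emph{not} $\dA_m\otimes_{\kk}\dA_n$, which carries commutativity relations; if you really took $B=\dA_m\otimes_{\kk}\dA_n$, the anchor step would fail for a second reason, since deleting vertices then kills compositions of surviving arrows (any composite that is rerouted by commutativity through a deleted vertex becomes zero), which is exactly the phenomenon the paper's supplement $\frakp_W$ is designed to record. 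Second, note that a supplement can only adjoin elements, never remove surplus arrows, so the correction above genuinely has to travel through $\J$ rather than through $\frakS_2$.
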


\begin{proof}
(1) Indeed, any \PDS\  quiver is a subquiver of some standard \PDS\  quiver, and it induces a natural embedding
\begin{align}\label{emb}
 \eta: \Q_A \to \Q_{m\otimes n}
\end{align}
from the quiver $\Q_A$ of $A$ to a quiver $\Q_{m\otimes n}$ of the form shown in \Pic \ref{fig:Am-tensor-An}. Let
\begin{align}
  \widetilde{B} = &\kk\Q_{m\otimes n}, \nonumber \\
  V = & (\Q_{m\otimes n})_0\backslash \mathrm{Im}(\eta), \nonumber  \\
\text{ and }  W = & \{ w\in \mathrm{Im}(\eta) \mid
        \text{ there is a path } \wp \text{ on } \Q_{m\otimes n} \nonumber \\
      & \text{ such that } \source(\wp) \ne w \ne \target(\wp),
        w \text{ is a vertex on } \wp, \nonumber \\
      & \wp=0 \text{ in } \widetilde{B}/\langle \e_{v}^{\widetilde{B}} \mid v\in V \rangle
        \text{ and } \wp \ne 0 \text{ in } A \} \nonumber
\end{align}
\begin{center}
  ($\e_v^{\widetilde{B}}$ is the idempotent of $\widetilde{B}$ corresponding to the vertex $v$ of the quiver of $\widetilde{B}$).
\end{center}
Then
\begin{center}
$\kk\Q_A \cong \big(\widetilde{B}/\langle \e_{v}^{\widetilde{B}} \mid v\in V \rangle\big)(\frakp_W)$,
\end{center}
where $\frakp_W = \frakp(w\mid w\in W)$.
Assume $X = \{\e^A_{v_i}-\e^A_{w_i}\mid 1\le i\le l\}$. Then we have $A_X^{\g}(\frakS_1) = (A/\langle X\rangle)(\frakS_1)$, where $\frakS_1 = \bigcup\limits_{1\le i\le l}\frakp(v_i)$.
Next, we define the following notations for this proof.
\begin{itemize}
  \item For arbitrary set $S$ of some paths on $\Q_A$, write $\eta(S) = \{\eta(s) \mid s\in S\}$;
  \item $\I_A = \langle r_k \mid k\in K \rangle$ is an admissible ideal of $\kk\Q_A$ such that $A=\kk\Q_A/\I_A$ ($K$ is an index set).
\end{itemize}
Then $\frakp_W \cap \{r_k\mid k\in K\} = \varnothing$, and we obtain the following.
\begin{align}
  & A_X^{\g}(\frakS_1)/\I = (A/\langle X\rangle)(\frakS_1)/\I \nonumber \\
\cong & \frac{\left(
  {\big(\widetilde{B}/\langle \e_{v}^{\widetilde{B}} \mid v\in V \rangle\big)(\frakp_W)}
  \Big/ {\langle \eta(\I_A), \eta(X) \rangle}
  \right)(\eta(\frakS_1))}{\eta(\I)} \nonumber \\
\cong & \frac{\left(
  {(\widetilde{B}/\langle \e_{v}^{\widetilde{B}} \mid v\in V \rangle)}
  \big/ {\langle \eta(\I_A), \eta(X) \rangle}
  \right)(\eta(\frakS_1)\cup\frakp_W)}{\eta(\I)}
  \ \ (\text{by Lemma \ref{lemm:supplement}}) \nonumber \\
\cong & \frac{\left(
  {\widetilde{B}/\eta(\I_A)}\big/{\langle \eta(X)\cup\{\e_{v}^{\widetilde{B}} \mid v\in V\} \rangle}
  \right)(\eta(\frakS_1)\cup\frakp_W)}{\eta(\I)}
  \ \ (\text{by Lemma \ref{lemm:ideal}})
\end{align}
Thus we have $A^{\g}_X(\frakS_1)/\I \cong B^{\g}_{Y}(\frakS_2)/\J$, where:
\begin{itemize}
  \item $B=\widetilde{B}/\eta(\I_A)$ is a standard \PDS\ algebra;
  \item $Y = \eta(X)\cup\{\e_v^{\widetilde{B}}\mid v\in V\}
             = \{\e_{v_i}^{\widetilde{B}} - \e_{w_i}^{\widetilde{B}} \mid 1\le i\le l\} \cup \{\e_v^{\widetilde{B}}\mid v\in V\}$;
  \item $\frakS_2 = \eta(\frakS_1)\cup\frakp_W$;
  \item and $\J = \eta(\I)$.
\end{itemize}

(2) This is a direct corollary of (1) by (\ref{formula:NPDSisquotient}) given in Lemma \ref{lemm:NPDSisquotient}.
\end{proof}

\begin{remark}\rm \label{rmk:NPDSisquotient2}
The existence of the standard \PDS\  algebra $B$ in Lemma \ref{lemm:NPDSisquotient2} is not unique,
and we can always choose the standard \PDS\  algebra $B$ of the form $A_{t\otimes t}$ because the existence of embedding
\[ \eta': \Q_A \to \Q_{t\otimes t} \]
can be given by the composition
\[\xymatrix{
\Q_A \ar[rd]_{\text{(\ref{emb})}} \ar[rr]^{\eta'} & & \Q_{t\otimes t} \\
  & \Q_{m\otimes n} \ar[ru]_{\text{subquiver}} &
} \]
where $t$ is a positive integer $\ge\max\{m,n\}$.
\end{remark}

By Proposition \ref{prop:GDSisquotient}, Lemma \ref{lemm:NPDSisquotient2} and Remark \ref{rmk:NPDSisquotient2}, the following result holds.

\begin{proposition} \label{prop:GDSisquotient2}
Let $A=\kk\Q/\I$ be a \GDS\ algebra with admissible ideal $\I$.
Then there are two algebras $A_n$ and $A_m$ of type $\A$ such that $A$ is isomorphic to a quotient of a \gluing algebra of $A_m\otimes_{\kk} A_n$.
\end{proposition}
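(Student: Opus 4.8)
The plan is to reduce Proposition \ref{prop:GDSisquotient2} to the two results already established, namely Proposition \ref{prop:GDSisquotient} and Lemma \ref{lemm:NPDSisquotient2}, by chaining them together. The statement asserts that any \GDS\ algebra $A$ is a quotient of a \gluing algebra of a concrete tensor algebra $A_m\otimes_{\kk} A_n$. Since $A_m\otimes_{\kk} A_n$ is precisely a standard \PDS\ algebra (as noted in the example preceding \Pic \ref{fig:Am-tensor-An}), the target form of the proposition is exactly ``quotient of a \gluing algebra of a standard \PDS\ algebra.'' Thus the whole proof amounts to showing that the two-step passage \GDS\ $\rightsquigarrow$ \PDS\ (via Proposition \ref{prop:GDSisquotient}) followed by \PDS\ $\rightsquigarrow$ standard \PDS\ (via Lemma \ref{lemm:NPDSisquotient2}(1)) composes cleanly.

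The first step I would carry out is to invoke Proposition \ref{prop:GDSisquotient}. There is, however, a hypothesis mismatch to address: Proposition \ref{prop:GDSisquotient} requires that every generator of $\I$ be either a zero relation or a commutative relation, whereas Proposition \ref{prop:GDSisquotient2} only assumes $\I$ admissible. So before applying it I would note that, by the definition of \GDS\ algebra (Definition \ref{def:GDS}), all diamond subquivers of $\Q$ are commutative in $A$; one should argue that an admissible ideal of a \GDS\ algebra may be presented with generators that are zero relations and commutative relations coming from the diamonds, so that the hypothesis of Proposition \ref{prop:GDSisquotient} is met. Granting this, Proposition \ref{prop:GDSisquotient} yields a \PDS\ algebra $C$, a \gluing algebra $C_X^{\g}(\frakS_1)$ of $C$, and an ideal $\I'$ with $A \cong C_X^{\g}(\frakS_1)/\I'$.

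The second step is to upgrade the \PDS\ algebra $C$ to a standard one. Here I would apply Lemma \ref{lemm:NPDSisquotient2}(1) to the quotient $C_X^{\g}(\frakS_1)/\I'$: it produces a standard \PDS\ algebra $B$, a \gluing algebra $B_Y^{\g}(\frakS_2)$, and an ideal $\J$ with
\[ A \cong C_X^{\g}(\frakS_1)/\I' \cong B_Y^{\g}(\frakS_2)/\J. \]
Finally, by Remark \ref{rmk:NPDSisquotient2}, the standard \PDS\ algebra $B$ may be chosen of the form $A_{t\otimes t} = A_t\otimes_{\kk} A_t$ for $t\ge\max\{m,n\}$, since the embedding $\eta$ of $\Q_C$ into a standard \PDS\ quiver factors through $\Q_{t\otimes t}$. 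Setting $A_m = A_n = A_t$ then exhibits $A$ as a quotient of a \gluing algebra of $A_m\otimes_{\kk} A_n$, which is the desired conclusion.

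The main obstacle I anticipate is bookkeeping rather than conceptual: one must verify that the supplement $\frakS_1$ and the relation set $\I'$ survive the embedding $\eta$ without interfering with the newly added idempotent and \gluing relations in $Y$. Concretely, Lemma \ref{lemm:NPDSisquotient2}(1) relies on the disjointness condition $\frakp_W \cap \{r_k \mid k\in K\} = \varnothing$ and on the commutation of \gluing-supplement operations with quotients provided by Lemmas \ref{lemm:ideal} and \ref{lemm:supplement}; the work is to confirm these hypotheses hold in the present setting. Since the bulk of that verification is already internal to the cited lemma, the proof here should be short, essentially a citation-driven composition of the two reductions together with the standardization furnished by Remark \ref{rmk:NPDSisquotient2}.
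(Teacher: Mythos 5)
Your proposal takes essentially the same route as the paper: Proposition \ref{prop:GDSisquotient2} is obtained there exactly by chaining Proposition \ref{prop:GDSisquotient}, Lemma \ref{lemm:NPDSisquotient2}(1) and Remark \ref{rmk:NPDSisquotient2}, which is precisely your argument. Your extra remark about the hypothesis mismatch (that the admissible ideal of a \GDS\ algebra must be presentable by zero and commutative relations before Proposition \ref{prop:GDSisquotient} applies) is a point the paper passes over silently, so your version is, if anything, slightly more careful.
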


\section{The Auslander algebras of string algebras}

The Auslander algebra $A^{\Aus}$ of representation-finite algebra is defined by
\[ A^{\Aus} = \End_A\Big(\bigoplus_{M\in\ind(\modcat A)}M\Big). \]
In this part, we study the Auslander algebra of representation-finite string algebra.

\begin{proposition} \label{prop:Aus-is-GDS}
The Auslander algebra $A^{\Aus}$ of representation-finite string algebra $A$ is a \GDS\ algebra.
\end{proposition}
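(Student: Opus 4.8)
The plan is to identify the bound quiver of $A^{\Aus}$ with the Auslander--Reiten quiver $\Gamma_A$ of $A$ together with its mesh relations, and then to recognise this datum as a \GDS\ algebra in the sense of Definition \ref{def:GDS}. Recall that, since $A$ is representation-finite, the ordinary quiver of $A^{\Aus}$ has the (finitely many) indecomposable $A$-modules as vertices and the irreducible morphisms between them as arrows, while its admissible ideal is generated by the mesh (commutativity) relations, one attached to each non-projective indecomposable. It therefore suffices to exhibit $\Gamma_A$, carrying these relations, as the bound quiver of a \GDS\ algebra.

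First I would invoke Theorem \ref{thm:BR1987}. As $A$ is a representation-finite string algebra, every indecomposable is a string module (no band modules occur), and every Auslander--Reiten sequence has the shape $L\to M_1\oplus M_2\to N$ with at most two indecomposable middle summands. Reading this locally around a non-projective indecomposable $N=\vertex{i+1}{j}$ with $\tau N=\vertex{i}{j+1}$, the middle terms $M_1=\vertex{i}{j}$ and $M_2=\vertex{i+1}{j+1}$ together with $N$ and $\tau N$ form exactly a diamond quiver $\Square{i}{j}$, whose four edges are the irreducible morphisms $a_{i,j}^{\Upper}$, $a_{i,j}^{\Left}$, $a_{i,j}^{\Right}$, $a_{i,j}^{\Down}$; when one middle term vanishes the diamond degenerates to a single path, which is the origin of the vertex deletions below. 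The mesh relation at $N$ is precisely $a_{i,j}^{\Upper}a_{i,j}^{\Right}-a_{i,j}^{\Left}a_{i,j}^{\Down}$, so each such diamond is commutative.

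Next I would assemble these local diamonds into a global stacked shape. Using the four-class description of the arrows of $\Gamma_A$ from \cite{BR1987} (addition and deletion of hooks and cohooks on strings), adjacent meshes share an edge: an irreducible morphism that is the right edge of one diamond and simultaneously the left edge of the next realises a horizontal connection $\Square{i}{j}\sqcup^{\rmH}\Square{i+1}{j}$, and the $\tau$-direction realises a vertical connection $\sqcup^{\rmV}$. Thus $\Gamma_A$ embeds as a vertex-subquiver of a full stacking of diamonds of the form (\ref{PDS-n}); the grid vertices not corresponding to genuine indecomposables (the boundary at projectives and injectives, and the empty corners of degenerate meshes) are collected into an idempotent $\e$, so that $\Gamma_A=\Q_{\alg/\alg\e\alg}$ for the resulting DS algebra $\alg$. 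If the string combinatorics force two grid vertices to coincide, that identification is exactly the gluing step of the \NPDS\ construction; otherwise $\alg$ is already \PDS. In either case $\alg$ is a DS algebra, every surviving diamond is commutative by the previous paragraph, so both clauses of Definition \ref{def:GDS}(3) hold and $A^{\Aus}$ is a \GDS\ algebra.

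The step I expect to be the main obstacle is this global assembly. The local diamond picture drops out cleanly from Theorem \ref{thm:BR1987}, but coherently matching the four Butler--Ringel arrow types to the $\Upper$, $\Down$, $\Left$, $\Right$ edges across the whole quiver, and determining precisely which grid vertices must be deleted (defining $\e$) and which must be glued (the \NPDS\ data), demands careful bookkeeping of how hooks and cohooks propagate along strings. Once the global stacked shape is established, the commutativity of the diamonds and the identification of the ideal of $A^{\Aus}$ with the mesh relations are comparatively routine.
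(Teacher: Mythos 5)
Your proposal follows essentially the same route as the paper: identify the bound quiver of $A^{\Aus}$ with the Auslander--Reiten quiver plus mesh relations, use Theorem \ref{thm:BR1987} to see each mesh as a (possibly degenerate) commutative diamond, and realise $\Q^{\Aus}$ as a DS quiver with some vertices deleted (and possibly glued), which is exactly Definition \ref{def:GDS}(3). The paper's own proof handles the global assembly just as briefly as you do (it invokes uniqueness of minimal almost split morphisms where you invoke the hook/cohook combinatorics), so your sketch matches it in both strategy and level of detail.
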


\begin{proof}
Let $\Gamma(\modcat A) = (\Gamma_0, \Gamma_1, \source, \target)$ be the Auslander-Reiten quiver of $A$ and $(\Q^{\Aus}, \I^{\Aus})$ be the bound quiver of $A^{\Aus}$.
Since the vertices and arrows of $\Gamma(\modcat A)$ correspond to the indecomposable right $A$-modules and irreducible morphisms in $\modcat A$, we have the following isomorphism
\[A^{\Aus} = \kk\Q^{\Aus}/\I^{\Aus} \cong (\kk \Gamma(\modcat A)/\I(\modcat A))^{\op}, \]
where $\I(\modcat A)$ is the ideal of the path algebra $\kk \Gamma(\modcat A)$ given by Auslander-Reiten sequence.
By Theorem \ref{thm:BR1987}, any Auslander-Reiten sequence is of the form
\[L  \mathop{\longrightarrow}\limits^{\left[{^{f_1}_{f_2}}\right]} M_1\oplus M_2
      \mathop{\longrightarrow}\limits^{\left[ g_1\ g_2\right]} N \]
\begin{center}
($L$, $M_1$, $M_2$ and $N$ are indecomposable,

at least one of $M_1$ and $M_2$ is non-zero, and

$f_1$, $f_2$, $g_1$, $g_2$ are either irreducible morphisms or zero).
\end{center}
It provides a subquiver of $\Q^{\Aus}$ which is a diamond.

If $L$ is a middle term of some Auslander-Reiten sequence, then it is one of vertices $L_1$ and $L_2$ of some subquiver
\[\xymatrix@R=0.5cm@C=0.5cm{
& L_1 \ar[rd]        &\\
  \bullet \ar[ru] \ar[rd] && \circ \\
& L_2 \ar[ru]        &
}\]
of $\Gamma(\modcat A)$,
or lies in a subquiver of $\Gamma(\modcat A)$ which is one of the following forms
\begin{center}
\begin{tikzpicture}[baseline=-0.25cm]
\draw (0,0) node{$\xymatrix@R=0.5cm@C=0.5cm{
  \bullet \ar[rd] && \circ  \\
& L \ar[ru]        &
}$};
\end{tikzpicture}
 \ \ \text{and} \ \
\begin{tikzpicture}[baseline=-0.25cm]
\draw (0,0) node{$\xymatrix@R=0.5cm@C=0.5cm{
& L \ar[rd]        &\\
  \bullet \ar[ru] && \circ
}$};
\end{tikzpicture}
\end{center}
By the uniqueness of left or right minimal almost split morphism, any point ``$\circ$'' is the vertex in $\Gamma_0$ corresponded by either $M_1$ or $M_2$. We can consider $N$ by the same way.

Thus, the quiver $\Q^{\Aus}$ of $A^{\Aus} = \kk\Q^{\Aus}/\I^{\Aus}$ can be obtained by deleting some vertices of \DS\ quiver,
i.e., there exists a \DS\ algebra $D = \kk\Q_D$ whose quiver is $\Q_D$ without relation such that $D/D\e D \cong \kk\Q^{\Aus}/\J$,
 where $\e$ is an idempotent in $D$ and $\J$ is an admissible ideal generated by some paths on $\Q_D$ which can be seen as paths on $\Q^{\Aus}$ and are zero in $A^{\Aus}$.
Therefore, $A^{\Aus}$ is a \GDS\ algebra because $\I^{\Aus}$ is an admissible ideal whose generator is either a commutative relation or a zero relation and all diamond subquivers of $\Q^{\Aus}$ are commutative in $A^{\Aus}$.
\end{proof}

We obtain our main results.

\begin{theorem} \label{thm:main}
The Auslander algebra of any representation-finite string algebra is a quotient of a \gluing algebra of $\dA_n^e = \dA_n\otimes_{\kk}\dA_n^{\op}$.
\end{theorem}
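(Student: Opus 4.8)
The plan is to assemble Theorem~\ref{thm:main} as a direct synthesis of the two structural results already established, namely Proposition~\ref{prop:Aus-is-GDS} and Proposition~\ref{prop:GDSisquotient2}. First I would invoke Proposition~\ref{prop:Aus-is-GDS} to record that the Auslander algebra $A^{\Aus}$ of a representation-finite string algebra $A$ is a \GDS\ algebra, say $A^{\Aus}=\kk\Q^{\Aus}/\I^{\Aus}$ with $\I^{\Aus}$ admissible. The crucial point to extract from the proof of that proposition is that $\I^{\Aus}$ is generated \emph{only} by zero relations and commutative relations coming from diamond subquivers, so $A^{\Aus}$ satisfies exactly the hypotheses demanded by Proposition~\ref{prop:GDSisquotient2}.

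Next I would apply Proposition~\ref{prop:GDSisquotient2} to this \GDS\ algebra $A^{\Aus}$. That proposition produces two type-$\A$ algebras $A_m$ and $A_n$ such that $A^{\Aus}$ is isomorphic to a quotient of a \gluing algebra of the tensor algebra $A_m\otimes_{\kk}A_n$. The remaining work is to upgrade $A_m\otimes_{\kk}A_n$ to the specific enveloping algebra $\dA_n^e=\dA_n\otimes_{\kk}\dA_n^{\op}$ named in the statement. Here I would use two observations: by Remark~\ref{rmk:NPDSisquotient2} the standard \PDS\ algebra in the construction can always be taken of the uniform form $A_{t\otimes t}=A_t\otimes_{\kk}A_t$ for a single integer $t\ge\max\{m,n\}$, via the embedding $\eta':\Q_A\to\Q_{t\otimes t}$; and the quiver of $\dA_t\otimes_{\kk}\dA_t^{\op}$ is precisely the standard \PDS\ quiver with the linear orientation depicted in Figure~\ref{fig:Am-tensor-An}(2). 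Since $\dA_t^{\op}\cong\dA_t$ as quivers up to reversing arrows (the underlying graph of a linear $\A_t$ is unchanged), the enveloping algebra $\dA_t^e$ is standard \PDS, so setting $n=t$ identifies the target of the \gluing construction with $\dA_n^e$.

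The one step I expect to require genuine care is matching the orientation conventions. Proposition~\ref{prop:GDSisquotient2} is phrased for the \emph{underlying-graph} version $A_m\otimes A_n$ (Figure~\ref{fig:Am-tensor-An}(1)), whereas the theorem asks specifically for the enveloping algebra $\dA_n^e$, whose quiver carries the mixed orientation of Figure~\ref{fig:Am-tensor-An}(2) with one factor reversed by the $(-)^{\op}$. I would argue that the \gluing and idempotent-deletion operations used throughout Section~3 depend only on the underlying graph and the commutativity relations of the diamonds, both of which are preserved when passing from the abstract standard \PDS\ algebra to $\dA_n^e$; concretely, the commutative-diamond relations $a^{\Upper}a^{\Right}-a^{\Left}a^{\Down}$ that define the \GDS\ structure are exactly the tensor relations $(e_i\otimes\beta)(\alpha\otimes e_{j'})-(\alpha\otimes e_{i'})(e_j\otimes\beta)$ listed in the definition of $\I_{A\otimes_{\kk}B}$. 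Thus the isomorphism class of the relevant \gluing algebra is insensitive to the chosen orientation, and replacing $A_t\otimes_{\kk}A_t$ by $\dA_t\otimes_{\kk}\dA_t^{\op}=\dA_t^e$ leaves the conclusion intact.

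Finally I would chain the isomorphisms: by Proposition~\ref{prop:GDSisquotient2} combined with Remark~\ref{rmk:NPDSisquotient2} there is a \gluing algebra $B^{\g}_Y(\frakS)$ of $\dA_n^e$ and an ideal $\mathcal{J}$ such that
\[
  A^{\Aus}\;\cong\;B^{\g}_Y(\frakS)\big/\mathcal{J},
\]
which is exactly the assertion that $A^{\Aus}$ is a quotient of a \gluing algebra of $\dA_n^e=\dA_n\otimes_{\kk}\dA_n^{\op}$. The proof is therefore essentially a bookkeeping assembly of Proposition~\ref{prop:Aus-is-GDS}, Proposition~\ref{prop:GDSisquotient2}, and Remark~\ref{rmk:NPDSisquotient2}, with the orientation-matching remark above being the only non-formal ingredient.
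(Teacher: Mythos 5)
Your proposal is correct and follows essentially the same route as the paper: the paper's proof likewise assembles Proposition~\ref{prop:Aus-is-GDS}, Proposition~\ref{prop:GDSisquotient2} and Remark~\ref{rmk:NPDSisquotient2} (taking the square case $A_{t\otimes t}$ with $t=\max\{m,n\}$), and then invokes the isomorphism $\dA_n\cong\dA_n^{\op}$ to identify that standard \PDS\ algebra with $\dA_n^e=\dA_n\otimes_{\kk}\dA_n^{\op}$, concluding that $A^{\Aus}\cong(\dA_n^e)^{\g}_{X}(\mathfrak{S})/\J$. Your orientation-matching paragraph simply spells out in detail what the paper compresses into the phrase ``by the isomorphism $\dA_n\cong\dA_n^{\op}$.''
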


\begin{proof}
Take $n = \max\{m,n\}$ in Remark \ref{rmk:NPDSisquotient2}. Then, by the isomorphism $\dA_n\cong \dA_n^{\op}$, Proposition \ref{prop:GDSisquotient2} and Proposition \ref{prop:Aus-is-GDS},
there is a set $X=\{\e_{v_i}^{\dA_n^e}-\e_{w_i}^{\dA_n^e} \mid 1\le i\le l\} \cup \{\e_{v_j}^{\dA_n^e} \mid j\in J\}$
and a supplement $\mathfrak{S}$ of $(\dA_n^e)^{\g}_X$ such that the Auslander algebra $A^{\Aus}$ of string algebra $A$ isomorphic to $(\dA_n^e)^{\g}_{X}(\mathfrak{S})/\J$ for some $\J$.
\end{proof}

\section{Examples and applications}

In this part, we provide three examples. In Subsection \ref{subsec:exp1}, we construct an instance for finite-dimensional algebra such that its Auslander algebra is isomorphic to $(\dA_n^e)_X^{\g}(\mathfrak{S})/\langle X\rangle$ with $X\ne \varnothing$ and $\mathfrak{S}\ne\varnothing$.
In Subsection \ref{subsec:exp2} and \ref{subsec:exp3}, we consider two classes of string algebras whose quivers are Dynkin types $\A$ and $\D$,
and further show that their Auslander algebras are isomorphic to quotients of $\dA_n^e$.

\subsection{An example with non-empty \gluing index and non-empty supplement} \label{subsec:exp1}

\begin{example} \rm
Let $A = \kk\Q/\I$ be an algebra with $\Q =$
\[ \xymatrix@R=0.6cm@C=0.5cm{ & 2 \ar[rd]^b & \\ 1 \ar[ru]^a \ar[rr]_c &  & 3 } \]
and $\I = \langle ab\rangle$.
Consider the enveloping algebra $\dA_4^e = \kk\Q_{\dA_4^e}/\I_{\dA_4^e}$ of $\dA_4$,
where $\Q_{\dA_4^e}$ is shown in \Pic \ref{fig:envel-of-A4} (1) and $\I_{\dA_4^e}$ is generated by the commutative relations $a_{i,j}^{\Left}a_{i,j}^{\Upper} - a_{i,j}^{\Down}a_{i,j}^{\Right}$ $(1\le i,j\le 3)$ such that all diamond subquivers of $\Q_{\dA_4^e}$ are commutative.
\begin{figure}[htbp]
\centering
\begin{tikzpicture}
\draw (0,0) node{$\xymatrix@C=0.5cm@R=0.5cm{
 (4,1) \ar@{<-}[d] & (4,2) \ar@{<-}[l] \ar@{<-}[d] & (4,3) \ar@{<-}[l] \ar@{<-}[d] & (4,4) \ar@{<-}[l] \ar@{<-}[d] \\
 (3,1) \ar@{<-}[d] & (3,2) \ar@{<-}[l] \ar@{<-}[d] & (3,3) \ar@{<-}[l] \ar@{<-}[d] & (3,4) \ar@{<-}[l] \ar@{<-}[d] \\
 (2,1) \ar@{<-}[d] & (2,2) \ar@{<-}[l] \ar@{<-}[d] & (2,3) \ar@{<-}[l] \ar@{<-}[d] & (2,4) \ar@{<-}[l] \ar@{<-}[d] \\
 (1,1)             & (1,2) \ar@{<-}[l]             & (1,3) \ar@{<-}[l]             & (1,4) \ar@{<-}[l]
}$};
\draw (0,-2.5) node{(1)};
\end{tikzpicture}
\ \
\begin{tikzpicture}
\draw (0,0) node{$\xymatrix@C=0.5cm@R=0.5cm{
 & (4,2) \ar[r]^{b_4} & (4,3)  & \\
   (3,1) \ar[r]^{a_2} & (3,2) \ar[r]^{a_3} \ar[u]^{b_3}
 & (3,3) \ar[u]^{c_4} \ar@/^1pc/[rdd]^{a_4} & \\
   (2,1) \ar[r]^{c_1} \ar[u]^{a_1} & (2,2) \ar[r]^{c_2} \ar[u]^{b_2}
 & (2,3) \ar[u]^{c_3} & \\
 & & & x \ar@/^1pc/[llu]^{b_1}
}$};
\draw (0,-2.5) node{(2)};
\end{tikzpicture}
\caption{The quivers of $\protect\dA_4^e$ and $A^{\Aus}$.}
\label{fig:envel-of-A4}
\end{figure}
Consider the idempotent $\e = \e_{(4,1)} + \e_{(4,4)} + \e_{(2,4)} + \e_{(1,4)} + \e_{(1,3)} + \e_{(1,1)}$ of $\dA_4^e$,
the quotient $A'=\dA_4^e/\langle\e, \e_{(1,2)}-\e_{(3,4)}\rangle$  is an algebra with the bound quiver $(\Q', \I')$,
where $\Q'$ is shown in \Pic \ref{fig:envel-of-A4} (2) and $\I'$ is generated by $a_1a_2-c_1b_2$, $b_2a_3-c_2c_3$, $b_3b_4-a_3c_4$, $a_2b_3$, $b_1c_2$, $c_3a_4$ and $a_4b_1$.
For any $v\in X:=\{(2,2), (3,2), (3,3), x\}$,
\begin{align}
\frakp(v) = \{ \wp_1\wp_2 \mid &  \wp_1, \wp_2 \text{ are paths on the quiver }
 \Q_{(\dA_4)^{e} } \text{ of } (\dA_4)^{e} \nonumber \\
 & \text{ such that } \source(\wp_1)=v=\target(\wp_2)
   \text{ and } \wp_1\wp_2=0 \text{ in } (\Q',\I') \}, \nonumber
\end{align}
then
\[(\dA_4^e)_X^{\g}\Big(\bigcup_{v\in X}\frakp(v)\Big)
= A'\Big(\bigcup_{v\in X}\frakp(v)\Big) \cong A'' = \kk\Q''/\I'', \]
where $\Q''=\Q'$ and $\I'' = \langle a_1a_2-c_1b_2, b_2a_3-c_2c_3, b_3b_4-a_3c_4 \rangle$.
Therefore,
\begin{align}
  A^{\Aus}
& \cong A''/\langle a_2b_3+\I'', b_1c_2+\I'', c_3a_4+\I'' \rangle \nonumber \\
& \cong (\dA_4\otimes_{\kk}\dA_4^{\op})^{\g}_X\Big(\bigcup_{v\in X}\frakp(v)\Big)
    \Big/ \langle a_2b_3, b_1c_2, c_3a_4 \rangle. \nonumber
\end{align}
\end{example}

\subsection{The Auslander algebra of $\protect\dA_n$} \label{subsec:exp2}
Recall that $\dAA_n$ is the quiver of Dynkin type $\A$ with linearly orientation in this paper.
For any $\dA_n=\kk\dAA_n$, we point out that the Auslander algebra of $\dA_n$ is a quotient of its enveloping algebra in this subsection.
This is an example for string algebra $A$ such that $A^{\Aus}$ is isomorphic to a quotient of the tensor $\dA_n^e$.
To be precise, $X=\varnothing$ and $\mathfrak{S}=\varnothing$ in this case, then $(\dA_n^e)_X^{\g}(\mathfrak{S})/\J$, see Remark \ref{rmk:gluing} (2).
Furthermore, we show that the number of vertices of the quiver $\dAA_n$ is controlled by the representation-type of $\dA_n^{\Aus}$ in this part.

\begin{example} \label{exp:A} \rm
For any $n\ge 1$, we have
\[\dA_n^{\Aus} \cong \dA_n^e/\J_{\A_n} = (\dA_n\otimes_{\kk}\dA_n^{\op})/\J_{\A_n}, \]
where the quiver of $\dA_n^e$ is of the form shown in \Pic \ref{fig:Am-tensor-An} (2) and $\J_{\A_n}=\left\langle\sum\limits_{j=1}^{n-1}\sum\limits_{k=j+1}^{n} \e_{(j,k)}\right\rangle$.
Furthermore, we show that $\dA_n^{\Aus}$ is representation-finite if and only if $n\le 4$.
Indeed, if $n=5$, then the bound quiver of $\dA_5^{\Aus} \cong \dA_5^e/\J_{\A_5}$ has a subquiver which is of the form shown in \Pic \ref{fig:exp:A} (1),
where $a_1b_1-a_2b_2=0$ and $c_1d_1-c_2d_2=0$.
Then we obtain an indecomposable module $M(\lambda\ne 0)$ whose quiver representation is induced by \Pic \ref{fig:exp:A} (2), i.e.,
  \begin{align} \label{formula:Aus-of-D-1}
    \widehat{M(\lambda)} \e^{\dA_5^{\Aus}}_{(i,j)} \cong_{\kk} &
    \begin{cases}
      \kk, & \text{ if } (i,j)\in\{(3,2), (3,3), (4,1), (4,3), (5,1), (5,2)\}; \\
      \kk^2, & \text{ if } (i,j)=(4,2); \\
      0, & \text{ otherwise. }
    \end{cases}
  \end{align}
It is easy to see that $M(\lambda)\cong M(\mu)$ if and only if $\lambda=\mu$. Thus, $\dA_5^{\Aus}$ is representation-infinite.
The proof of this statement is similar to \cite[Chap VII, Proposition 2.5, Lemma 2.6(i) and Corollary 2.7]{ASS2006}.
\begin{figure}[htbp]
\centering
\begin{tikzpicture}
\draw (0,0) node{$\xymatrix@R=0.3cm{
 & 3' \ar[rd]^{b_1} & \\
 1 \ar[ru]^{a_1} \ar[rd]_{a_2} & & 4 \\
 & 3 \ar[rd]^{d_1} \ar[ru]_{b_2}& \\
 2 \ar[ru]^{c_1} \ar[rd]_{c_2} & & 5 \\
 & 3'' \ar[ru]_{d_2} &
}$};
\draw (0,-2.5) node{(1): The quiver $X$, where $a_1b_1-a_2b_2$};
\draw (0,-3) node{$=0$ and $c_1d_1-c_2d_2=0$};
\end{tikzpicture}
\ \
\begin{tikzpicture}
\draw (0,0) node{$\xymatrix@R=0.3cm{
& \kk \ar[rd]^{1} &\\
\kk \ar[ru]^{1} \ar[rd]^{\left[{^1_0}\right]} & & \kk \\
& \kk^2 \ar[ru]^{[1\ 1]} \ar[rd]_{[1\ \lambda]} & \\
\kk \ar[ru]_{\left[{^0_1}\right]} \ar[rd]_{1} & & \kk. \\
& \kk \ar[ru]_{\lambda} &}$};
\draw (0,-2.5) node{(2): an indecomposable};
\draw (0,-3) node{representation of $X$};
\end{tikzpicture}
\caption{The quiver containing subquiver of Euclid type $\widetilde{\mathbb{D}}_4$}
\label{fig:exp:A}
\end{figure}
Furthermore, $\dA_{n}^{\Aus}$ ($n\ge 5$) is representation-infinite because any right $\dA_{5}^{\Aus}$-module $M(\lambda)$ can be naturally seen as a right $\dA_{n}^{\Aus}$-module.
We can check that $\dA_{\le 4}^{\Aus}$ are representation-finite,
to be more precise, we have $\sharp\ind(\modcat \dA_{2}^{\Aus})=5$, $\sharp\ind(\modcat \dA_{3}^{\Aus})=17$ and $\sharp\ind(\modcat \dA_{4}^{\Aus})=56$ by their Auslander-Reiten quivers. Therefore, $\dA_n^{\Aus}$ is representation-finite if and only if $n\le 4$.
\end{example}

\subsection{The Auslander algebra of $\protect\dD_n$} \label{subsec:exp3}
We will describe the representation-type of Auslander algebra $(\dD_n)^{\Aus}$ ($n\ge 4$) by enveloping algebra $\dA_n^e$ in this subsection.

\begin{example} \label{exp:D} \rm
$\dD_n$ is a string algebra, whose Auslander algebra is
\[\dD_n^{\Aus} \cong \dA_n^e/\J_{\D_n} = (\dA_n\otimes_{\kk}\dA_n^{\op})/\J_{\D_n}, \]
where the quiver of $\dA_n^e$ is of the form shown in \Pic \ref{fig:Am-tensor-An} (2) and \[\J_{\D_n} = \left\langle
\left(\sum\limits_{k=1}^{n-2}\sum\limits_{j=k+1}^n \e_{(k,j)} - \e_{(n-2,n-1)} \right)
+ \sum\limits_{{1\le j\le n} \atop {j\ne n-1}}\e_{(n,j)}
\right\rangle. \]
Next, we show that $\dD_n^{\Aus}$ is representation-finite if and only if $n\le 5$.
First of all, $\dD_6^{\Aus}$ is representation-infinite because any indecomposable module $M(\lambda\ne0)$ (its quiver representation is shown in \Pic \ref{fig:exp:A} (2)) induces a right $\dD_6^{\Aus}$-module,
and $M(\lambda)\cong M(\mu)$ if and only if $\lambda=\mu$. Thus, $\dD_{6}^{\Aus}$ is representation-infinite.
The representation-infiniteness of $\dD_{\ge 6}^{\Aus}$ can be proved by similar way.
We can check that $\dD_{\le 5}^{\Aus}$ is representation-finite, to be more precise, we have
$\sharp\ind(\modcat\dD_4^{\Aus})=40$ and $\sharp\ind(\modcat\dD_5^{\Aus}) = 109$ by Auslander-Reiten quiver.
Therefore, $\dD_n^{\Aus}$ is representation-finite if and only if $n\le 5$.
\end{example}


\vspace{10pt}
\noindent
\textbf{Acknowledgements.}
Jian He was supported by the National Natural Science Foundation of China (Grant No. 12171230) and Youth Science and Technology Found of Gansu Provincial (Grant No. 23JRRA825).
Yu-Zhe Liu was supported by National Natural Science Foundation of China (Grant Nos. 12401042, 12171207), Guizhou Provincial Basic Research Program (Natural Science) (Grant No. ZK[2024]YiBan066), and Scientic Research Foundation of Guizhou University (Grant Nos. [2022]53, [2022]65, [2023]16).



 \def\cprime{$'$}
\providecommand{\bysame}{\leavevmode\hbox to3em{\hrulefill}\thinspace}
\providecommand{\MR}{\relax\ifhmode\unskip\space\fi MR }
\providecommand{\MRhref}[2]{%
  \href{http://www.ams.org/mathscinet-getitem?mr=#1}{#2}
}
\providecommand{\href}[2]{#2}

\end{document}